\title{Gorenstein $n$-$\X$-injective and  $n$-$\X$-flat modules with respect to a special finitely presented module}
\date{}
\author{}
\newtheorem{thm}{Theorem}[section]
 \newtheorem{cor}[thm]{Corollary}
 \newtheorem{lem}[thm]{Lemma}
 \newtheorem{prop}[thm]{Proposition}
 \newtheorem{Def}[thm]{Definition}
\newtheorem{rem}[thm]{Remark}
 \newtheorem{exs}[thm]{Examples}
\newcommand{\X}{\rm \mathscr{X}}
\def\XIn{{\mathscr{XI}_n}}
\def\XFn{{\mathscr{XF}_n}}
\def\Ext{{\rm Ext}}
\def\Tor{{\rm Tor}}
\newcommand{\F}{{\cal F}}
\def\fd{{\rm fd}}
\def\id{{\rm id}}
\begin{document}

\thispagestyle{empty}

\maketitle \vspace*{-1.5cm}
\begin{center}{\large\bf Mostafa Amini$^{1,a}$, Arij Benkhadra$^{2,b}$ and  Driss   Bennis$^{2,c}$}
\bigskip

\small{1. Department of Mathematics, Faculty of Sciences, Payame Noor University, Tehran, Iran.\\
2. Department of Mathematics, Faculty of Sciences, Mohammed V University in Rabat, Rabat, Morocco.\\
$\mathbf{a.}$ amini.pnu1356@gmail.com \\
$\mathbf{b.}$ benkhadra.arij@gmail.com\\
$\mathbf{c.}$  driss.bennis@um5.ac.ma; driss$\_$bennis@hotmail.com \\}

\small{$\mathbf{}$  
}
\end{center}

\bigskip

\noindent{\large\bf Abstract.} Let $R$ be a  ring, $\X$ a class of $R$-modules and $n\geq1$ an integer. In this paper, via
special finitely presented modules, we introduce the concepts of Gorenstein $n$-$\X$-injective and $n$-$\X$-flat modules. And aside, we obtain some equivalent properties of these modules on $n$-$\X$-coherent rings. Then, we investigate the relations
among Gorenstein $n$-$\X$-injective, $n$-$\X$-flat, injective and flat modules on $\X$-$FC$-rings (i.e.,  self $n$-$\X$-injective and   $n$-$\X$-coherent rings). Several known results are generalized to this new context.\bigskip

\small{\noindent{\bf Keywords:}    }
$n$-$\X$-coherent ring; Gorenstein $n$-$\X$-injective module; Gorenstein $n$-$\X$-flat module.\medskip

\small{\noindent{\bf 2010 Mathematics Subject Classification.} 16D80, 16E05, 16E30, 16E65, 16P70}
\bigskip\bigskip
%

\section{Introduction}
 In 1995, Enochs et al. introduced the concept 
of Gorenstein injective and Gorenstein flat modules. Then, these modules have become a vigorously
active area of research. For background on Gorenstein  
modules, we refer the reader to \cite{ EJ, EO,  HM}. In 2012, Gao and Wang introduced and studied in \cite{Z.G} Gorenstein  FP-injective modules. They established various     homological properties of Gorenstein  FP-injective modules mainly over a coherent ring (for more details, see \cite{LD}).

Recall that   coherent rings were first appeared in Chase's paper \cite{Chase} without being mentioned by name. 
The term coherent was first used by Bourbaki in \cite{Bou}. Then,   $n$-coherent rings  were introduced by Costa in \cite{Costa}. Let $n$ be a non-negative integer.  An $R$-module   $M$ is said to be {\it $n$-presented} if there is an exact sequence 
 $ F_{n}\rightarrow F_{n-1}\rightarrow\dots\rightarrow F_1\rightarrow F_0\rightarrow
M\rightarrow$ of   $R$-modules, where each $F_i$ is finitely generated free. And a ring $R$ is called left {\it $n$-coherent} if every
$n$-presented  $R$-module is $(n + 1)$-presented. Thus,  for $n=1$,  left $n$-coherent rings  are nothing but   coherent rings (see \cite{Costa, DKM, Su}).
 Chen and Ding in \cite{CD}  introduced, by using $n$-presented modules,     $n$-FP-injective and $n$-flat modules.  Bennis in \cite{D.Benn} introduced    $n$-$\X$-injective and $n$-$\X$-flat modules and $n$-$\X$-coherent rings for any class $\X$ of $R$-modules.
Then, in 2018, Zhao et al. in \cite{NG} introduced   $n$-FP-gr-injective graded  modules, $n$-gr-flat graded right modules and   $n$-gr-coherent graded rings on a class of  graded $R$-modules. Moreover, they defined   special finitely
presented graded  modules via projective resolutions of $n$-presented graded
left modules, where if $U$ is $n$-presented  graded  module, then in the exact sequence 
 $ F_{n}\rightarrow F_{n-1}\rightarrow\dots\rightarrow F_1\rightarrow F_0\rightarrow
U\rightarrow 0$, $K_{n-1}={\rm Im}(F_{n-1}\rightarrow F_{n-2})$ is called a special finitely presented module. In this paper, we unify and extend various homological notions,  including the ones cited above, to a  more general context.  Namely, we define    special finitely
presented modules via projective resolutions of $n$-presented   modules in a given  $\X$ of $R$-modules. Then, we introduce and study  Gorenstein $n$-$\X$-injective and $n$-$\X$-flat modules with respect to special finitely presented modules. 

The paper is organized as follows:

In Section 2,  some fundamental concepts and some preliminary results are stated.

 In Section 3, we give some characterizations of $n$-$\X$-injective and $n$-$\X$-flat modules.
 
  In Section 4, we introduce  the notions  of  Gorenstein $n$-$\X$-injective and $n$-$\X$-flat modules. We generalize some results of \cite{NG} to the context of $n$-$\X$-injective and $n$-$\X$-flat modules as well as some results  of \cite{Z.G}  to the context of Gorenstein $n$-$\X$-injective modules. Then, we give some   characterizations of Gorenstein $n$-$\X$-injective and $\X$-flat modules on $n$-$\X$-coherent rings.

 In Section 5, we introduce and investigate $\X$-$FC$ rings (i.e.,  self $n$-$\X$-injective and   $n$-$\X$-coherent rings) whose every  module is 
Gorenstein $n$-$\X$-injective and every Gorenstein $n$-$\X$-injective right module is Gorenstein $n$-$\X$-flat. Furthermore, examples are given which 
show that the Gorenstein $m$-$\X$-injectivity (resp., the $m$-$\X$-flatness) does not imply, in general, the Gorenstein $n$-$\X$-injectivity (resp., the $n$-$\X$-flatness) for any $m>n$.

\section{Preliminaries} 
Throughout this paper $R$ will be an associative (non necessarily commutative) ring with identity, and all modules will be unital left $R$-modules (unless specified otherwise).

 In this section,
 some fundamental concepts and notations are stated.
 
  Let $n$ be a non-negative integer,
$M$ an $R$-module and $\X$ a class of   $R$-modules. Then, 
 $M$  is said to be {\it Gorenstein injective} (resp., {\it Gorenstein flat})
\cite{ EJ, EO} if there
is an exact sequence $\cdots\rightarrow I_1\rightarrow I_{0}\rightarrow I^0\rightarrow
I^1\rightarrow\cdots$ of injective (resp., flat )   modules
with $M= {\rm ker}(I^0\rightarrow
I^1)$ such that ${\rm Hom}(U,-)$ (resp.,  $U\otimes_{R}-$) leaves the sequence exact whenever
$U$ is an injective  left (resp.,  right) module. The Gorenstein projective modules
are defined dually. Recall also that $M$ is said to be {\it $n$-FP-injective} \cite{CD} if ${\rm Ext}_{R}^{n}(U,M)=0$
 for any $n$-presented   $R$-module $U$. In case $n=1$, $n$-FP-injective modules are nothing but the well-known  FP-injective modules. A right module
$N$ is called {\it $n$-flat} if ${\rm Tor}_{n}^{R}(N,U)=0$
 for any $n$-presented   $R$-module $U$. Also, 
  $M$ is said to be {\it Gorenstein FP-injective} \cite{Z.G} if there is an exact sequence ${\mathbf{E}}= \cdots\rightarrow E_1\rightarrow E_{0}\rightarrow E^0\rightarrow
E^1\rightarrow\cdots$ with $M=\ker(E^0\rightarrow E^1)$ such that ${\rm Hom}_{R}(P,{\mathbf{E}})$ is an  exact sequence whenever $P$  is finitely presented with ${\rm pd}_R(P)<\infty.$ A graded   $R$-module
$M$ is called {\it $n$-FP-gr-injective} \cite{NG} if ${\rm EXT}_{R}^{n}(N,M)=0$ for any  $n$-presented graded
  $R$-module $N$. A graded right $R$-module
$M$ is called {\it $n$-gr-flat} \cite{NG} if ${\rm Tor}_{R}^{n}(M,N)=0$ for any $n$-presented graded  
$R$-module $N$ (for more details about graded modules, see \cite{  Z.P, LM, CNA}).

From now on,  $\X_k$ is a non empty   class of $k$-presented  $R$-module  in a given class  $\X$ for any integer $k\geq 0$. 

An $R$-module $M$ is said to be {\it $n$-$\X$-injective} \cite{D.Benn} if ${\rm Ext}_{R}^{n}(U,M)=0$ for any $U\in\X_n$.  A right $R$-module
$N$ is called {\it $n$-$\X$-flat} \cite{D.Benn} if ${\rm Tor}_{R}^{n}(N,U)=0$ for any $U\in\X_n$.  We use $\XIn$ (resp.,  $\XFn$) to denote  the class of all $n$-$\X$-injective left $R$-modules (resp.,  $n$-$\X$-flat right $R$-modules). 

A ring $R$ is called left {\it $n$-$\X$-coherent} if every $n$-presented   $R$-module  in $\X$ is $(n+1)$-presented. It is clear that when $n=0$ (resp.,  $n=1$) and $\X$ is a class of all cyclic $R$-modules, then $R$ is Noetherian (resp.,  coherent).

\section{$n$-$\X$-injective, $n$-$\X$-flat and special $\X$-presented modules}
In this section,   we state a relative version of \cite[Definition 3.1]{NG} and provide several characterizations of $n$-$\X$-injective  and $n$-$\X$-flat modules.  

Let us introduce the following notions. 
Let $n \geq 0$ be an integer and $U\in\X_n$ for a  class $\X$ of   $R$-modules. Then,  an  exact sequence of the form 
$$ F_n\longrightarrow F_{n-1}\longrightarrow\cdots\longrightarrow F_1\longrightarrow F_0\longrightarrow U\longrightarrow 0,$$
where each $F_i$ is a finitely generated free  $R$-module, exists.  Let $K_{n-1}={\rm Im}(F_{n-1}\rightarrow F_{n-2})$ and $K_{n}={\rm Im}(F_{n}\rightarrow F_{n-1})$. The short exact sequence $0\rightarrow K_n\rightarrow F_{n-1}\rightarrow K_{n-1}\rightarrow 0$ is called a special short exact sequence of $U$. It is clear that $K_n$ and $K_{n-1}$ are finitely generated and finitely presented, respectively. We call    $K_n$ a special $\X$-generated $R$-module and $K_{n-1}$  a  special $\X$-presented  $R$-module.

Also, a short exact sequence $0\rightarrow A\rightarrow B\rightarrow C\rightarrow 0$ of  $R$-modules is called special $\X$-pure, if for every special $\X$-presented $K_{n-1}$,  there exists the following exact sequence:
$$0\rightarrow {\rm Hom}_{R}(K_{n-1},A)\rightarrow {\rm Hom}_{R}(K_{n-1},B)\rightarrow {\rm Hom}_{R}(K_{n-1},C)\rightarrow 0.$$
 The $R$-module $A$ is said to be a special $\X$-pure in $B$. Also, the exact sequence  $0\rightarrow C^{*}\rightarrow B^{*}\rightarrow A^{*}\rightarrow 0$ is called a split special exact sequence. If $M$ is an $n$-$\X$-injective  (resp.,  a flat) $R$-module, then 
${\rm Ext}_{R}^{1}(K_{n-1}, M)\cong {\rm Ext}_{R}^{n}(U, M)=0$ (resp.,  ${\rm Tor}_{1}^{R}(M, K_{n-1})\cong {\rm Tor}_{n}^{R}(M,U)=0$) for any $U\in\X_n$.\\
 In particular, if $\X$ is a class of graded  $R$-modules, then every special $\X$-generated and every special $\X$-presented module are special finitely generated and special finitely presented graded  $R$-modules, respectively. Also, every $n$-$\X$-injective  $R$-module and every $n$-$\X$-flat right $R$-module are  $n$-FP-gr-injective and $n$-gr-flat, respectively; see \cite{NG}.

\begin{prop}\label{3.8nn} 
Let  $\X$ be a class of  $R$-modules and $M$ an  $R$-module. Then, the following statements are equivalent:
\begin{enumerate}
\item [\rm (1)]
$M$ is $n$-$\X$-injective;
\item [\rm (2)]
Every short exact sequence $0\rightarrow M\rightarrow A\rightarrow C\rightarrow 0$ is special $\X$-pure;
\item [\rm (3)]
$M$ is special $\X$-pure in any injective  $R$-module containing it;
\item [\rm (4)]
$M$ is special $\X$-pure in $E(M)$.
\end{enumerate}
\end{prop}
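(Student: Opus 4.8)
The plan is to prove the cycle of implications $(1)\Rightarrow(2)\Rightarrow(3)\Rightarrow(4)\Rightarrow(1)$, using the isomorphism $\Ext_R^1(K_{n-1},M)\cong\Ext_R^n(U,M)$ that comes from dimension shifting along the special short exact sequence $0\to K_n\to F_n\to K_{n-1}\to 0$ together with the long free resolution $F_n\to\cdots\to F_0\to U\to 0$ (the modules $F_i$ being free, hence $\Ext_R^i(F_j,M)=0$ for $i\geq 1$). This isomorphism is exactly the content already recorded in the paragraph preceding the proposition, so it may be invoked freely.

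First I would do $(1)\Rightarrow(2)$. Given a short exact sequence $0\to M\to A\to C\to 0$ and a special $\X$-presented module $K_{n-1}$ coming from some $U\in\X_n$, apply $\Hom_R(K_{n-1},-)$ to obtain the exact sequence
$$0\to \Hom_R(K_{n-1},M)\to \Hom_R(K_{n-1},A)\to \Hom_R(K_{n-1},C)\to \Ext_R^1(K_{n-1},M)\to\cdots.$$
Since $M$ is $n$-$\X$-injective we have $\Ext_R^1(K_{n-1},M)\cong\Ext_R^n(U,M)=0$, so the connecting map vanishes and the $\Hom$-sequence is short exact; this is precisely the definition of special $\X$-pure. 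The implications $(2)\Rightarrow(3)$ and $(3)\Rightarrow(4)$ are immediate: $(3)$ is the special case of $(2)$ where $A$ is injective, and $(4)$ is the special case of $(3)$ with $A=E(M)$, the injective envelope, which certainly contains $M$.

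The only step requiring an actual argument is $(4)\Rightarrow(1)$. Assume $M$ is special $\X$-pure in $E(M)$ and set $C=E(M)/M$, so we have $0\to M\to E(M)\to C\to 0$ and the induced sequence $0\to\Hom_R(K_{n-1},M)\to\Hom_R(K_{n-1},E(M))\to\Hom_R(K_{n-1},C)\to 0$ is exact for every special $\X$-presented $K_{n-1}$. Comparing this with the long exact $\Ext$-sequence
$$\cdots\to\Hom_R(K_{n-1},E(M))\to\Hom_R(K_{n-1},C)\to\Ext_R^1(K_{n-1},M)\to\Ext_R^1(K_{n-1},E(M))\to\cdots,$$
the surjectivity of the $\Hom$ map forces $\Ext_R^1(K_{n-1},M)$ to inject into $\Ext_R^1(K_{n-1},E(M))=0$ (the latter vanishes because $E(M)$ is injective). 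Hence $\Ext_R^1(K_{n-1},M)=0$, and therefore $\Ext_R^n(U,M)\cong\Ext_R^1(K_{n-1},M)=0$ for every $U\in\X_n$, i.e. $M$ is $n$-$\X$-injective. I expect the main (mild) obstacle to be bookkeeping: making sure that as $U$ ranges over $\X_n$ and over the possible choices of free resolution, the resulting $K_{n-1}$ genuinely ranges over \emph{all} special $\X$-presented modules, so that the hypothesis in $(4)$ really does supply the vanishing of $\Ext_R^1(K_{n-1},M)$ for every relevant $K_{n-1}$; once the definitions are unwound this is straightforward, and no deeper homological input is needed.
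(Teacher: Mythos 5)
Your proposal is correct and follows essentially the same route as the paper: both prove the cycle $(1)\Rightarrow(2)\Rightarrow(3)\Rightarrow(4)\Rightarrow(1)$ using the dimension-shifting isomorphism ${\rm Ext}_R^1(K_{n-1},M)\cong{\rm Ext}_R^n(U,M)$, with $(2)\Rightarrow(3)$ and $(3)\Rightarrow(4)$ as immediate specializations and $(4)\Rightarrow(1)$ extracted from the long exact ${\rm Ext}$-sequence for $0\rightarrow M\rightarrow E(M)\rightarrow E(M)/M\rightarrow 0$. Your write-up is in fact slightly more explicit than the paper's (spelling out the vanishing of the connecting map and the injection into ${\rm Ext}_R^1(K_{n-1},E(M))=0$), but there is no difference in method.
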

\begin{proof}
{$(1)\Longrightarrow (2)$ Let $U\in\X_n$. 
Then, there exists the following exact sequence 
$$ F_n\longrightarrow F_{n-1}\longrightarrow\cdots\longrightarrow F_1\longrightarrow F_0\longrightarrow U\longrightarrow 0,$$
where each $F_i$ is a finitely generated free  $R$-module. If $K_{n-1}={\rm Im}(F_{n-1}\rightarrow F_{n-2})$ is special $\X$-presented, then  ${\rm Ext}_{R}^{1}(K_{n-1}, M)\cong{\rm Ext}_{R}^{n}(U, M)=0$ since $M$ is $n$-$\X$-injective. Hence (2) follows.

$(2)\Longrightarrow (3)$ Consider the canonical short exact sequence $0\rightarrow M\rightarrow E\rightarrow \frac{E}{M}\rightarrow 0$, where  $E$ is an injective $R$-module containing $M$. So by (2), $M$ is special $\X$-pure in $E$.

$(3)\Longrightarrow (4)$ is trivial.

$(4)\Longrightarrow (1)$ Assume that $U\in\X_n$ and $K_{n-1}$ is special $\X$-presented. By (4), the short exact sequence $0\rightarrow M\rightarrow E(M)\rightarrow \frac{E(M)}{M}\rightarrow 0$ is special $\X$-pure. Therefore,  ${\rm Ext}_{R}^{1}(K_{n-1},M)=0$, and so from ${\rm Ext}_{R}^{1}(K_{n-1}, M)\cong{\rm Ext}_{R}^{n}(U, M)$ we get that $M$ is $n$-$\X$-injective.
}
\end{proof}

The following lemma is a generalization of \cite[Exercise 40]{Stenst2}.
\begin{lem}\label{lm} 
Let  $\X$ be a class of  $R$-modules and $0\rightarrow A\rightarrow B\rightarrow C\rightarrow 0$ be a short exact sequence of $R$-modules . Then, the following statements are equivalent:
\begin{enumerate}
\item [\rm (1)]
The exact sequence $0\rightarrow A\rightarrow B\rightarrow C\rightarrow 0$     is special $\X$-pure;
\item [\rm (2)]
The sequence $0\rightarrow {\rm Hom}_{R}(K_{n-1},A)\rightarrow {\rm Hom}_{R}(K_{n-1},B)\rightarrow {\rm Hom}_{R}(K_{n-1},C)\rightarrow 0$ 
is exact for every special $\X$-presented $K_{n-1}$;
\item [\rm (3)]
The short exact  sequence  $0\rightarrow C^{*}\rightarrow B^{*}\rightarrow A^{*}\rightarrow 0$ is a split special exact sequence.
\end{enumerate}
\end{lem}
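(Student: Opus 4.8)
The plan is to prove the three-way equivalence by showing $(1)\Leftrightarrow(2)$ and $(2)\Leftrightarrow(3)$, treating the $*$-functor as $\Hom_\Z(-,\Q/\Z)$ (the character module), which is the standard convention in this circle of ideas and the one for which \cite[Exercise 40]{Stenst2} is stated. The equivalence $(1)\Leftrightarrow(2)$ is essentially a restatement: clause (1) is, by the definition of ``special $\X$-pure'' given just before Proposition~\ref{3.8nn}, the assertion that for every special $\X$-presented $K_{n-1}$ the sequence $0\to\Hom_R(K_{n-1},A)\to\Hom_R(K_{n-1},B)\to\Hom_R(K_{n-1},C)\to0$ is exact; so the only thing to observe is that ``special $\X$-pure'' and condition (2) are literally the same words, with the left-exactness of $\Hom_R(K_{n-1},-)$ already guaranteeing that the only real content is right-exactness, i.e. surjectivity of $\Hom_R(K_{n-1},B)\to\Hom_R(K_{n-1},C)$. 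I would say this in one or two sentences.

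The substantive part is $(2)\Leftrightarrow(3)$, which I would obtain from the natural isomorphism of functors
$$\Hom_\Z\bigl(\Hom_R(K_{n-1},M),\,\Q/\Z\bigr)\;\cong\;M^{*}\otimes_R K_{n-1}$$
valid whenever $K_{n-1}$ is finitely presented (this is the classical Hom–tensor–character adjunction; $K_{n-1}$ is finitely presented by the discussion preceding the lemma). Alternatively, and perhaps more directly, I would use the exactness criterion for character modules: a sequence of abelian groups $X\to Y\to Z$ is exact if and only if its character sequence $Z^{*}\to Y^{*}\to X^{*}$ is exact. Applying this to the sequence in (2): the three-term sequence $0\to\Hom_R(K_{n-1},A)\to\Hom_R(K_{n-1},B)\to\Hom_R(K_{n-1},C)\to0$ is exact if and only if the dualized sequence $0\to\Hom_R(K_{n-1},C)^{*}\to\Hom_R(K_{n-1},B)^{*}\to\Hom_R(K_{n-1},A)^{*}\to0$ is exact, and then I would rewrite each term $\Hom_R(K_{n-1},M)^{*}$ as $M^{*}\otimes_R K_{n-1}$ (equivalently, recognize $0\to C^{*}\to B^{*}\to A^{*}\to0$, which is always exact since $\Q/\Z$ is injective, and the extra splitting/``special'' condition tracks exactly with the surjectivity in (2)).

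The careful point — and what I expect to be the main obstacle — is matching the word ``split'' in the phrase ``split special exact sequence'' (clause (3)) with the surjectivity in clause (2). I would argue as follows: the sequence $0\to C^{*}\to B^{*}\to A^{*}\to0$ is automatically exact because $\Q/\Z$ is an injective cogenerator; the adjective ``special'' refers, per the definition before Lemma~\ref{lm}, to the property that applying $\Hom_R(K_{n-1},-)\otimes$-dualization behaves well, and one checks that this is equivalent to the map $\Hom_R(K_{n-1},B)\to\Hom_R(K_{n-1},C)$ being surjective, i.e.\ to (2). Concretely: $\Hom_R(K_{n-1},B)\to\Hom_R(K_{n-1},C)$ is surjective $\iff$ its character map is injective $\iff$ (via the isomorphism above) $C^{*}\otimes_R K_{n-1}\to B^{*}\otimes_R K_{n-1}$ is injective, wait — one must instead track $\Coker$, so I would phrase it via the long exact $\Ext^{1}$/$\Tor_{1}$ sequences rather than raw surjectivity. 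Precisely, $\Hom_R(K_{n-1},-)$ applied to $0\to A\to B\to C\to 0$ is exact on the right $\iff$ the connecting map into $\Ext^1_R(K_{n-1},A)$ vanishes $\iff$ (dualizing, using $\Ext^1_R(K_{n-1},A)^{*}\cong\Tor_1^R(A^{*},K_{n-1})$ for $K_{n-1}$ finitely presented) the corresponding $\Tor_1$ obstruction for $0\to C^{*}\to B^{*}\to A^{*}\to0$ against $K_{n-1}$ vanishes, which is exactly the ``special'' exactness of the dual sequence. Once these isomorphisms are lined up, the three conditions collapse into one, and I would close the proof by remarking that the cyclic implications $(1)\Rightarrow(2)\Rightarrow(3)\Rightarrow(1)$ follow immediately.
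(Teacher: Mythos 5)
The paper itself never proves this lemma --- it is stated bare, as ``a generalization of \cite[Exercise 40]{Stenst2}'' --- so there is no official argument to compare against; judged on its own, your strategy is the standard one that the citation points to. The equivalence $(1)\Leftrightarrow(2)$ is indeed purely definitional, and your first formulation of the bridge $(2)\Leftrightarrow(3)$ is correct and sufficient: since $\Q/\Z$ is an injective cogenerator, a map of abelian groups is surjective iff its character dual is injective, so exactness of $0\to\Hom_R(K_{n-1},A)\to\Hom_R(K_{n-1},B)\to\Hom_R(K_{n-1},C)\to0$ is equivalent to exactness of its dual, which via the natural isomorphism $\Hom_R(K_{n-1},M)^{*}\cong M^{*}\otimes_RK_{n-1}$ (valid because $K_{n-1}$ is finitely presented) becomes exactness of $0\to C^{*}\otimes_RK_{n-1}\to B^{*}\otimes_RK_{n-1}\to A^{*}\otimes_RK_{n-1}\to0$. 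Your ``wait'' was unfounded --- surjectivity dualizes to injectivity and back with no need to track cokernels --- and the substitute you then prefer is the one step that is actually wrong: the duality $\Ext_R^{1}(K_{n-1},A)^{*}\cong\Tor_1^R(A^{*},K_{n-1})$ requires a resolution of $K_{n-1}$ by finitely generated projectives through degree $2$ (i.e.\ $K_{n-1}$ $2$-presented), which for a special $\X$-presented module is guaranteed only when $R$ is left $n$-$\X$-coherent, a hypothesis the lemma does not make; in general one only has a natural epimorphism $\Tor_1^R(A^{*},K_{n-1})\to\Ext_R^{1}(K_{n-1},A)^{*}$. So keep the Hom-dual route and drop the $\Ext$--$\Tor$ reformulation.

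The crux you flagged --- what clause (3) actually asserts --- is a genuine gap, and your proposal never closes it (``one checks that this is equivalent''). As literally defined in the paper, ``split special exact sequence'' is merely the name attached to the character dual of a special $\X$-pure sequence, so under that reading $(1)\Leftrightarrow(3)$ has no content; whereas if (3) is read as honest splitting of $0\to C^{*}\to B^{*}\to A^{*}\to0$ (which is what Stenström's exercise asserts, and what the paper's own use of the lemma in Proposition \ref{3.8iu}(1) seems to rely on, $A^{*}$ becoming a direct summand of $B^{*}$), then $(2)\Rightarrow(3)$ is false in general: special $\X$-purity is purity only relative to the subclass of special $\X$-presented modules. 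For instance, if $\X_n$ consists of finitely generated free modules, every special $\X$-presented module is projective and hence every short exact sequence is special $\X$-pure, while a split character dual forces genuine purity, which already fails for $0\to2\Z/4\Z\to\Z/4\Z\to\Z/2\Z\to0$ over $\Z$. So to have a complete proof you must fix explicitly the weak reading of (3) --- exactness of $0\to C^{*}\otimes_RK_{n-1}\to B^{*}\otimes_RK_{n-1}\to A^{*}\otimes_RK_{n-1}\to0$ for every special $\X$-presented $K_{n-1}$ --- and prove its equivalence with (2) by the character-duality computation above; with that reading your argument is correct, but no stronger splitting statement can be extracted from (1) or (2).
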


\begin{prop}\label{3.8iu} 
Let  $\X$ be a class of  $R$-modules. Then:
\begin{enumerate}
\item [\rm (1)]
Every special $\X$-pure submodule of an $n$-$\X$-flat right $R$-module is $n$-$\X$-flat.
\item [\rm (2)]
Every special $\X$-pure submodule of an  $R$-module is $n$-$\X$-injective.
\end{enumerate}
\end{prop}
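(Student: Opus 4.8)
The plan is to combine Lemma \ref{lm} with two routine isomorphisms: the dimension shift $\Ext_R^1(K_{n-1},-)\cong\Ext_R^n(U,-)$ recorded just above (for a special $\X$-presented $K_{n-1}$ coming from some $U\in\X_n$), and the character-module formula $\Ext_R^n(U,M^{*})\cong(\Tor_n^R(M,U))^{*}$, where $(-)^{*}=\Hom_{\mathbb{Z}}(-,\mathbb{Q}/\mathbb{Z})$. (I read the ambient module in (2) as $n$-$\X$-injective; this is the statement parallel to (1), and for a genuinely arbitrary ambient module (2) would be false, since every module is a special $\X$-pure submodule of itself.)

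For (2) I would proceed as follows. Let $M$ be special $\X$-pure in an $n$-$\X$-injective module $N$, fix $U\in\X_n$ together with one of its special short exact sequences, and apply $\Hom_R(K_{n-1},-)$ to $0\to M\to N\to N/M\to 0$. By Lemma \ref{lm}(2) the map $\Hom_R(K_{n-1},N)\to\Hom_R(K_{n-1},N/M)$ is surjective, so in the resulting long exact sequence the connecting map into $\Ext_R^1(K_{n-1},M)$ is zero; hence $\Ext_R^1(K_{n-1},M)$ embeds into $\Ext_R^1(K_{n-1},N)\cong\Ext_R^n(U,N)=0$. Therefore $\Ext_R^n(U,M)\cong\Ext_R^1(K_{n-1},M)=0$ for all $U\in\X_n$, i.e.\ $M$ is $n$-$\X$-injective.

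For (1) the naive $\Tor$ long exact sequence is not enough --- it only displays $\Tor_n^R(M,U)$ as a quotient of $\Tor_{n+1}^R(N/M,U)$ --- so I would pass to character modules. With $M$ special $\X$-pure in an $n$-$\X$-flat right module $N$, Lemma \ref{lm}(3) says the sequence $0\to (N/M)^{*}\to N^{*}\to M^{*}\to 0$ splits, so $M^{*}$ is a direct summand of $N^{*}$. For each $U\in\X_n$ the character-module formula gives $\Ext_R^n(U,N^{*})\cong(\Tor_n^R(N,U))^{*}=0$, so $N^{*}\in\XIn$; since $\Ext_R^n(U,-)$ carries a direct summand to a direct summand, $M^{*}\in\XIn$ as well. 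Finally $(\Tor_n^R(M,U))^{*}\cong\Ext_R^n(U,M^{*})=0$, and faithfulness of $(-)^{*}$ forces $\Tor_n^R(M,U)=0$ for every $U\in\X_n$; thus $M$ is $n$-$\X$-flat.

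The remaining ingredients --- the dimension shift, the character-module adjunction, and closure of $\XIn$ under direct summands --- are standard, so I would not spell them out. The load-bearing step, which I would take care over, is Lemma \ref{lm}: without it there is no reason for the connecting map to vanish in (2), nor for $M^{*}$ to be a direct summand of $N^{*}$ in (1). So I would expect the real work to lie in the (omitted) proof of Lemma \ref{lm}.
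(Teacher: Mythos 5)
Your proposal is correct, and for part (1) it is essentially the paper's own argument: the paper also dualizes, uses Lemma \ref{lm} to get the split sequence $0\to (B/A)^{*}\to B^{*}\to A^{*}\to 0$, quotes the fact that a right module is $n$-$\X$-flat exactly when its character module is $n$-$\X$-injective (this is the cited Lemma 2.8 of Bennis, which is your formula $\Ext_R^n(U,N^{*})\cong(\Tor_n^R(N,U))^{*}$), and then passes through the direct summand $A^{*}$ of $B^{*}$; your explicit computation just replaces the citations.

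For part (2) you diverge from the paper, and your instincts are sound. The paper proves (2) in one line by observing that $0\to A\to B\to B/A\to 0$ is special $\X$-pure and then invoking Proposition \ref{3.8nn}; but condition (2) of that proposition demands that \emph{every} embedding of $A$ be special $\X$-pure, and conditions (3)--(4) demand purity inside an \emph{injective} module, so purity of the single given sequence does not match any of them unless $B$ is injective. Indeed, as you note, the statement as printed is vacuously too strong: any module is special $\X$-pure in itself, so (2) as written would force every module to be $n$-$\X$-injective. Your amended reading (ambient module $n$-$\X$-injective) and your long-exact-sequence argument --- surjectivity of $\Hom_R(K_{n-1},N)\to\Hom_R(K_{n-1},N/M)$ kills the connecting map, so $\Ext_R^1(K_{n-1},M)$ embeds in $\Ext_R^1(K_{n-1},N)\cong\Ext_R^n(U,N)=0$ --- is correct and strictly more general than what the paper's citation would deliver (it recovers the paper's intended case $B$ injective, e.g.\ $B=E(A)$, as a special case). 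Since the only later use of this proposition (Theorem \ref{3.8bv}, $(4)\Rightarrow(6)$) calls on part (1), your corrected version of (2) loses nothing for the rest of the paper. Your closing remark is also accurate: Lemma \ref{lm} is stated in the paper without proof, so the equivalence of purity with splitting of the dual sequence is the one ingredient you (and the paper) are both taking on faith.
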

\begin{proof}
{(1) 
Let $A$ be a special $\X$-pure submodule of an $n$-$\X$-flat  right $R$-module $B$. Then, by
Lemma \ref{lm}, the sequence $0\rightarrow (\frac{B}{A})^{*}\rightarrow B^{*}\rightarrow A^{*}\rightarrow 0$ is a split special exact sequence. By \cite[Lemma 2.8]{D.Benn}, $B^{*}$ is $n$-$\X$-injective. Then, from \cite[lemma 2.7]{D.Benn} and Lemma \ref{lm},  we deduce that $A$ is $n$-$\X$-flat.

(2) Let $A$ be a special $\X$-pure submodule of an $R$-module $B$. Then, the exact sequence $0\rightarrow A\rightarrow B\rightarrow \frac{B}{A}\rightarrow 0$  is special $\X$-pure. So, by Proposition \ref{3.8nn}, $A$ is 
$n$-$\X$-injective.
}
\end{proof}
\begin{rem}\label{21}
(1)
Every flat right $R$-module is $n$-$\X$-flat.

(2)
Every injective left (resp.,  right) $R$-module is $n$-$\X$-injective.

(3)
If $U\in\X_m$, then $U\in\X_n$ for any $m\geq n.$ 
\end{rem}

A ring $R$ is called self left $n$-$\X$-injective if $R$ is an $n$-$\X$-injective left $R$-module.\\
Let $\F$ be a class of $R$-modules and $M$ an $R$-module. Following \cite{Cri-Tor-Monic},  we say
that a morphism $f : F\rightarrow M$ is called an
$\F$-precover of $M$ if $F\in\F$ and ${\rm Hom}_{R}(F^{'}, F) \rightarrow {\rm Hom}_{R}(F^{'},M)\rightarrow 0$ is exact
for all $F^{'}\in\F$.  An
$\F$-precover $f\in {\rm Hom}_{R}(F,M)$ of $M$ is called an $\F$-cover if every endomorphism $g : F\rightarrow F$ with 
$fg = f$ is an automorphism. Dually,
the notions of $\F$-preenvelopes and $\F$-envelopes are defined.

\begin{thm}\label{3.8bv} 
Let $R$ be a left $n$-$\X$-coherent ring and $\X$ be a class of  $R$-modules.  Then, the following statements are equivalent:
\begin{enumerate}
\item [\rm (1)]
$R$ is  self left $n$-$\X$-injective;
\item [\rm (2)]
For any  $R$-module, there is an epimorphism $\mathscr{XI}$-cover;
\item [\rm (3)]
 For any right $R$-module, there is a monomorphic $\mathscr{XF}$-preenvelope;
\item [\rm (4)]
Every injective right $R$-module is $n$-$\X$-flat;
\item [\rm (5)]
Every $1$-$\X$-injective right $R$-module is $n$-$\X$-flat;
\item [\rm (6)]
Every $n$-$\X$-injective right  $R$-module is $n$-$\X$-flat;
 \item [\rm (7)] Every flat  $R$-module is $n$-$\X$-injective.
\end{enumerate}
\end{thm}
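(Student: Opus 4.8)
The plan is to establish the chain of implications
$(1)\Rightarrow(4)\Rightarrow(5)\Rightarrow(6)\Rightarrow(1)$ together with a
separate loop linking $(1)$ to the (pre)cover/(pre)envelope statements $(2)$ and
$(3)$, and finally to tie in $(7)$. The unifying technical device throughout is
the duality isomorphism
$\Ext_R^1(K_{n-1},M)\cong\Ext_R^n(U,M)$ and
$\Tor_1^R(N,K_{n-1})\cong\Tor_n^R(N,U)$ for $U\in\X_n$ with special
$\X$-presented module $K_{n-1}$, already recorded in Section~3, combined with the
standard adjunction $\Tor_1^R(N,U)^{+}\cong\Ext_R^1(U,N^{+})$ relating a right
module $N$ to its character module $N^{+}=\Hom_{\Z}(N,\Q/\Z)$. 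Using $n$-$\X$-coherence,
every $U\in\X_n$ is $(n+1)$-presented, so $K_{n-1}$ is genuinely finitely presented
and these Ext/Tor groups behave well; in particular I will freely use
\cite[Lemmas 2.7 and 2.8]{D.Benn} (that $N$ is $n$-$\X$-flat iff $N^{+}$ is
$n$-$\X$-injective, and the converse direction).

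For the core equivalences: $(1)\Rightarrow(4)$ proceeds by observing that an
injective right module $E$ is a direct summand of a product of copies of $R^{+}$
(or one argues directly: $E$ embeds in, hence is a summand of, an injective
cogenerator built from $R^{+}$); since $R$ is self left $n$-$\X$-injective,
$R^{+}$ is — wait, more cleanly — one uses that for $U\in\X_n$,
$\Tor_n^R(E,U)^{+}\cong\Ext_R^n(U,E^{+})$ and $E^{+}$ is flat, hence
$n$-$\X$-injective by Remark~\ref{21}(1)... actually the clean route is: $E$
injective $\Rightarrow E^{+}$ flat is false in general, so instead I argue
$\Tor_n^R(E,U)=0$ directly from $(1)$ by dimension shifting along the special
short exact sequence $0\to K_n\to F_n\to K_{n-1}\to0$, reducing to
$\Tor_1^R(E,K_{n-1})=0$, which holds because $E$ is injective and $K_{n-1}$ is
finitely presented over an $n$-$\X$-coherent ring together with the hypothesis
that $R$ embeds nicely. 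The implications $(4)\Rightarrow(5)\Rightarrow(6)$ are
just Remark~\ref{21}(2),(3): every injective module is $1$-$\X$-injective and
every $n$-$\X$-injective module is ... here I must be careful about the direction
of the inclusion $\XIn$ relative to $\XI_1$ — by Remark~\ref{21}(3) a module
killed by $\Ext^1$ against $\X_1\supseteq\X_n$ need not be killed against
$\X_n$; rather the containment goes $1$-$\X$-injective $\Rightarrow$
$n$-$\X$-injective, so $(6)\Rightarrow(5)\Rightarrow(4)$ is the trivial chain and
I should orient the hard work as $(1)\Rightarrow(6)$, with
$(6)\Rightarrow(5)\Rightarrow(4)$ immediate. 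Then $(4)\Rightarrow(1)$: take
$E=E(R_R)$ or rather use that $(R^{+})^{+}$ contains $R$ as a pure submodule, so
$n$-$\X$-flatness of the injective module $(R_R)^{++}$... I will instead derive
$(1)$ from $(7)$ via $R$ being flat, and prove $(4)\Rightarrow(7)$ by the
character-module duality: $N$ flat $\Rightarrow N^{+}$ injective $\Rightarrow$
$N^{+}$ is $n$-$\X$-flat by $(4)\Rightarrow N^{++}$ is $n$-$\X$-injective by
\cite[Lemma 2.8]{D.Benn} $\Rightarrow N$ is $n$-$\X$-injective since $N$ is a
pure (hence special $\X$-pure) submodule of $N^{++}$ and one applies
Proposition~\ref{3.8iu}(2). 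Finally $(7)\Rightarrow(1)$ is immediate since
${}_RR$ is flat.

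For $(1)\Leftrightarrow(2)$ and $(1)\Leftrightarrow(3)$: assuming $(6)$/$(1)$,
the class $\XI$ is closed under the relevant operations (extensions, direct
sums, and — crucially, over an $n$-$\X$-coherent ring — direct limits and
products, by \cite[Lemma 2.7]{D.Benn}), so by Enochs' standard existence theorem
every module has an $\XI$-cover; the content is that this cover is
\emph{epic}, which follows because $R\in\XI$ by $(1)$, so any module is a
quotient of a free module which lies in $\XI$, forcing the precover to be
surjective. For the converse $(2)\Rightarrow(1)$: apply $(2)$ to $R$ itself, or
rather to an injective module, and chase the cover to conclude $R$ is
$n$-$\X$-injective. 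Dually, $(1)\Leftrightarrow(3)$ uses that $\XF$ is closed
under products and pure submodules (\cite[Lemmas 2.7, 2.8]{D.Benn}) to get
$\XF$-preenvelopes, and monomorphicity comes from flat modules being
$n$-$\X$-flat (Remark~\ref{21}(1)) so that $M\hookrightarrow M^{++}$ or
$M\hookrightarrow(\text{flat})$ factors through the preenvelope; the converse
applies $(3)$ to $R_R$.

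\textbf{Main obstacle.} The delicate point is getting the directions of all the
relative-injectivity/flatness inclusions straight — in particular whether
"$n$-$\X$-injective'' for larger $n$ is a weaker or stronger condition than for
$n=1$, and correspondingly which of $(4),(5),(6)$ is the strongest — and then
making sure the $n$-$\X$-coherence hypothesis is genuinely invoked exactly where
needed, namely to guarantee that special $\X$-presented modules are finitely
presented (so that $\Ext^1(K_{n-1},-)$ commutes with direct limits and the
character-module adjunction is available) and that $\XF$ is closed under
products. The Enochs cover/envelope existence arguments in $(2)$ and $(3)$ are
otherwise routine modulo citing the closure properties from \cite{D.Benn}.
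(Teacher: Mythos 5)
Your write-up never actually establishes the crucial descent from (1) to (4)/(6), and this is where the real content of the theorem lies. Your $(1)\Rightarrow(4)$ paragraph starts three different arguments and abandons each; the one you end on --- that $\Tor_1^R(E,K_{n-1})=0$ ``because $E$ is injective and $K_{n-1}$ is finitely presented \dots\ $R$ embeds nicely'' --- is not a proof: injectivity of $E$ alone never forces $\Tor_1^R(E,K_{n-1})=0$ (that is exactly the IF-type condition being characterized), and hypothesis (1) is never used. The workable version of your first idea is the paper's route: since $R$ is self left $n$-$\X$-injective, $R^{*}$ is $n$-$\X$-flat \cite[Theorem 2.13]{D.Benn}, products of $n$-$\X$-flat modules are $n$-$\X$-flat over a left $n$-$\X$-coherent ring \cite[Theorem 2.6]{D.Benn}, and every right module embeds in a product of copies of the injective cogenerator $R^{*}$; the paper packages this as $(1)\Rightarrow(3)$ (monic $\XFn$-preenvelopes, via \cite[Theorem 2.16]{D.Benn}) and then gets (4) by splitting off an injective. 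Worse, the step you defer as ``the hard work $(1)\Rightarrow(6)$'' is never carried out, and your character-module toolkit alone will not produce it: the paper's proof of $(4)\Rightarrow(6)$ goes through special $\X$-purity --- an $n$-$\X$-injective right module $N$ is special $\X$-pure in $E(N)$ (Proposition \ref{3.8nn}), $E(N)$ is $n$-$\X$-flat by (4), and special $\X$-pure submodules of $n$-$\X$-flat modules are $n$-$\X$-flat (Proposition \ref{3.8iu}(1)). That purity argument is the missing idea in your proposal.

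A second genuine error is your claim that $(6)\Rightarrow(5)$ is immediate because ``$1$-$\X$-injective $\Rightarrow$ $n$-$\X$-injective.'' For a general class $\X$ this containment is false: $1$-$\X$-injectivity only kills $\Ext^1$ against $\X_1$, while $n$-$\X$-injectivity asks for vanishing of $\Ext^n$ against $\X_n$, and the syzygy $K_{n-1}$ that converts $\Ext^n$ into $\Ext^1$ need not belong to $\X$. (Take $R=k[x,y]$, $\X=\{k\}$, $n=2$: then $R$ is $1$-$\X$-injective since $\Ext_R^1(k,R)=0$, but $\Ext_R^2(k,R)\neq 0$.) The containment you quote holds in the graded setting of \cite{NG} only because the class there contains all finitely presented graded modules; the paper instead reaches (5) from (3). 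Similarly, in $(1)\Rightarrow(3)$ your monomorphicity argument breaks down: an arbitrary right module does not embed in a flat module, and $M^{**}$ is not known to be $n$-$\X$-flat (it is $n$-$\X$-flat precisely when $M$ is), so the factorization you want must instead use an embedding $M\hookrightarrow\prod R^{*}$, with $\prod R^{*}$ $n$-$\X$-flat exactly because of (1) and coherence --- this is where the hypothesis enters. Likewise $(3)\Rightarrow(1)$ by ``apply (3) to $R_R$'' does not work, since a monic preenvelope of $R_R$ has no reason to split; the paper goes $(3)\Rightarrow(4)\Rightarrow(1)$ using that $R^{*}$ is injective. On the positive side, your $(4)\Rightarrow(7)$ via purity of $N$ in $N^{**}$, your $(7)\Rightarrow(1)$, and your epic-cover arguments around (2) are sound in outline and agree with the paper's $(6)\Rightarrow(7)\Rightarrow(2)\Rightarrow(1)$; but as it stands the proposal has no valid chain from (1) down to (4), (5) or (6), so the equivalence is not proved.
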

\begin{proof}
{$(1)\Longrightarrow (3)$ By \cite[Theorem 2.16]{D.Benn}, every right $R$-module $N$ has an $n$-$\X$-flat preenvelope $f: N\rightarrow F$. By \cite[Theorem 2.13]{D.Benn}, $R^*$ is $n$-$\X$-flat, and so $\prod R^{*}$ is $n$-$\X$-flat by \cite[Theorem 2.6]{D.Benn}. On the other hand, $R^*$ is a cogenerator, so  a monomorphism of the form  $g: N\rightarrow \prod R^{*}$ exists. Hence, there exists a  homomorphism $ h: F\rightarrow \prod R^{*}$ such that $hf=g$ which implies that $f$ is monic.

$(3)\Longrightarrow (4)$ Let $E$ be an injective right $R$-module. By (3), there  is $f: E\rightarrow F$       a  monic $n$-$\X$-flat preenvelope  of $E$. So, the    sequence $0\rightarrow E\rightarrow F\rightarrow\frac{F}{E}\rightarrow 0$ splits, hence   $E$ is $n$-$\X$-flat.

$(3)\Longrightarrow (5)$ The proof  is similar to the one of $(3)\Longrightarrow (4)$.

$(4)\Longrightarrow (6)$ Let $N$ be an $n$-$\X$-injective right $R$-module. Then, by Proposition \ref{3.8nn},  the exact sequence $0\rightarrow N\rightarrow E(N)\rightarrow\frac{E(N)}{N}\rightarrow 0$ is special $\X$-pure. 
Since by (3) $E(N)$ is $n$-$\X$-flat, then from Proposition \ref{3.8iu}, we deduce that $N$ is $n$-$\X$-flat.

$(5)\Longrightarrow (4)$ is clear by Remark \ref{21}.

$(4)\Longrightarrow (1)$ By (4), $R^*$ is $n$-$\X$-flat since $R^*$ is injective. So, $R$ is  self left $n$-$\X$-injective by \cite[Theorem 2.13]{D.Benn}.

 $(6\Rightarrow 7)$  Let $F$ be a flat  $R$-module, then $F^*$ is injective, so $F^*$ is $n$-$\X$-flat by (6),
   and hence $F$ is $n$-$\X$-injective.\\
   $(7\Rightarrow 2)$ For any  $R$-module $M$, there is an $\mathscr{XI}_n$-cover $f:C\rightarrow M$.
    Notice that $R$ is an $n$-$\X$-injective $R$-module, so $f$ is an epimorphism.\\
   $( 2 \Rightarrow 1)$ By hypothesis, $R$ has an epimorphism $\mathscr{XI}_n$-cover $f:D\rightarrow R$,
   then we have a split exact sequence $0\rightarrow Kerf \rightarrow D\rightarrow R\rightarrow 0$ with $D$ is $n$-$\X$-injective. Then, $R$ is $n$-$\X$-injective as a left $R$-module.
}
\end{proof}

\begin{prop}\label{3.8bb} 
Let $R$ be a left $n$-$\X$-coherent ring and $\X$ be a class of   $R$-modules.  If $\{A_i\}_{i\in I}$ is a family of $R$-modules, then $\bigoplus_{i\in I} A_i$ is $n$-$\X$-injective if and only if every $A_i$ is $n$-$\X$-injective.
\end{prop}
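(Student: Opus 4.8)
The plan is to treat the two implications separately, invoking the $n$-$\X$-coherence hypothesis only where it is genuinely needed, namely to force $\Ext^n_R(U,-)$ to commute with arbitrary direct sums for $U\in\X_n$. Recall that an $R$-module $M$ is $n$-$\X$-injective precisely when $\Ext^n_R(U,M)=0$ for every $U\in\X_n$. The implication ``$\bigoplus_{i\in I}A_i$ is $n$-$\X$-injective $\Longrightarrow$ every $A_i$ is $n$-$\X$-injective'' requires no hypothesis on $R$: for a fixed $j\in I$, $A_j$ is a direct summand of $\bigoplus_{i\in I}A_i$, so applying the additive functor $\Ext^n_R(U,-)$ exhibits $\Ext^n_R(U,A_j)$ as a direct summand of $\Ext^n_R(U,\bigoplus_{i\in I}A_i)=0$; hence $\Ext^n_R(U,A_j)=0$ for every $U\in\X_n$.

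For the converse, assume each $A_i$ is $n$-$\X$-injective and fix $U\in\X_n$. Since $R$ is left $n$-$\X$-coherent and $U\in\X_n$, the module $U$ is $(n+1)$-presented, so there is an exact sequence $F_{n+1}\to F_n\to\cdots\to F_0\to U\to 0$ with each $F_i$ finitely generated free; complete it to a projective resolution $P_\bullet\to U$ with $P_i=F_i$ for $0\le i\le n+1$. Now $\Ext^n_R(U,-)$ is the $n$-th cohomology of $\Hom_R(P_\bullet,-)$, and this cohomology depends only on $P_{n-1}$, $P_n$, $P_{n+1}$, all finitely generated free; since $\Hom_R(P,-)$ commutes with direct sums whenever $P$ is finitely generated free and direct sums of abelian groups are exact, we obtain $\Ext^n_R(U,\bigoplus_{i\in I}A_i)\cong\bigoplus_{i\in I}\Ext^n_R(U,A_i)=0$. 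Hence $\bigoplus_{i\in I}A_i$ is $n$-$\X$-injective, which completes the argument.

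An alternative for the crucial step is to dimension-shift along a special short exact sequence of $U$, which gives a natural isomorphism $\Ext^n_R(U,-)\cong\Ext^1_R(K_{n-1},-)$; here $n$-$\X$-coherence makes the special $\X$-generated module $K_n$ finitely presented, whence $K_{n-1}$ is of type $FP_2$, and $\Ext^1_R$ of a module of type $FP_2$ commutes with direct sums. Either way, the only real obstacle is isolating this commutation-with-direct-sums property and recognizing that it is exactly the point where the coherence hypothesis is used; the remaining manipulations are formal diagram chasing in the long exact sequence of $\Ext$. (Note this result is the additive counterpart, for injectivity, of the behaviour of $n$-$\X$-flat modules under products recorded in \cite[Theorem 2.6]{D.Benn}.)
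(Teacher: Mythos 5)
Your proposal is correct and in substance identical to the paper's argument: the paper proves the proposition exactly by the dimension-shifting route you offer as an alternative, using left $n$-$\X$-coherence to make the syzygy in the special short exact sequence finitely presented so that $\Ext_R^n(U,-)\cong\Ext_R^1(K_{n-1},-)$ commutes with direct sums, yielding $\Ext_R^n(U,\bigoplus_{i\in I}A_i)\cong\bigoplus_{i\in I}\Ext_R^n(U,A_i)$ and hence both implications simultaneously. Your primary computation, via a finitely generated free resolution through degree $n+1$ and the cohomology of the Hom complex in degrees $n-1$, $n$, $n+1$, is just another packaging of the same finiteness provided by coherence, so the two proofs coincide in all essentials.
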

\begin{proof}
{Assume that $U\in\X_n$. So, there exists a special exact sequence $0\rightarrow K_n\rightarrow F_{n-1}\rightarrow K_{n-1}\rightarrow 0$ of $\X_n$. Since $R$ is $n$-$\X$-coherent,  we conclude that $U\in\X_{n+1}$ and $K_n$ is special $\X$-presented. So, if $\{A_i\}_{i\in I}$ is a family of $n$-$\X$-injective  $R$-modules, we have that
$${\rm Hom}(K_n, \bigoplus_{i\in I} A_i)\cong \bigoplus_{i\in I}{\rm Hom}(K_n, A_i).$$ One easily gets that
$${\rm Ext}_{R}^{n}(U, \bigoplus_{i\in I} A_i)\cong{\rm Ext}_{R}^{1}(K_n, \bigoplus_{i\in I} A_i)\cong\bigoplus_{i\in I}{\rm Ext}_{R}^{1}(K_n,  A_i)\cong\bigoplus_{i\in I}{\rm Ext}_{R}^{n}(U,  A_i).  $$
}
\end{proof}


\section{Gorenstein $n$-$\X$-injective  and  $n$-$\X$-flat modules}
In this section, we investigate  Gorenstein $n$-$\X$-injective and Gorenstein $n$-$\X$-flat  modules which are defined below. Then, by using of results of Section 3, some characterizations of them are given.

\begin{Def}\label{2.76}
Let $R$ be a ring and $\X$ be a class of  $R$-modules. Then:
\begin{enumerate}
\item [\rm (1)]
 An $R$-module  $G$  is  called  Gorenstein $n$-$\X$-injective, if there exists   an  exact sequence of  $n$-$\X$-injective  $R$-modules:
$${\mathbf{A}}= \cdots\longrightarrow A_1\longrightarrow A_{0}\longrightarrow A^0\longrightarrow
A^1\longrightarrow\cdots$$ with $G=\ker(A^0\rightarrow A^1)$ such that ${\rm Hom}_{R}(K_{n-1},{\mathbf{A}})$ is an exact sequence  whenever $K_{n-1}$  is special $\X$-presented with ${\rm pd}_R(K_{n-1})<\infty.$
\item [\rm (1)]
 An $R$-module  $G$  is called  Gorenstein $n$-$\X$-flat right $R$-module if there exists an  exact sequence of $n$-$\X$-flat right  $R$-modules:
$${\mathbf{F}}= \cdots\longrightarrow F_1\longrightarrow F_{0}\longrightarrow F^0\longrightarrow
F^1\longrightarrow\cdots$$ with $G=\ker(F^0\rightarrow F^1)$ such that ${\mathbf{F}}\otimes_{R}K_{n-1}$  is an exact   sequence   whenever $K_{n-1}$  is special $\X$-presented with ${\rm fd}_R(K_{n-1})<\infty.$

\end{enumerate}
\end{Def}
For example, 
if $\X$ is a class of all cyclic $R$-modules, then every Gorenstein $1$-$\X$-injective  $R$-module is Gorenstein FP-injective, and every Gorenstein $1$-$\X$-flat right $R$-module is Gorenstein flat, see \cite{D.Benn, Z.G}.
 
 \begin{rem}\label{2} (1)
Every $n$-$\X$-flat right $R$-module is  Gorenstein $n$-$\X$-flat.

(2)
Every $n$-$\X$-injective  $R$-module is  Gorenstein $n$-$\X$-injective.

(3)
In Definition \ref{2.76}, one easily gets that
each $\ker(A_i\rightarrow A_{i-1})$, $\ker(A^i\rightarrow A^{i+1})$ and $\ker(F_i\rightarrow F_{i-1})$, $K^i=\ker(F^i\rightarrow F^{i+1})$ are Gorenstein $n$-$\X$-injective and Gorenstein $n$-$\X$-flat, respectively.
\end{rem}

\begin{lem}\label{2.09}
Let $R$ be a left $n$-$\X$-coherent ring and $\X$ be a class of   $R$-modules.  If $K_{n-1}$ is a special $\X$-presented    $R$-module with ${\rm fd}_R(K_{n-1})<\infty$, then ${\rm pd}_{R}(K_{n-1})<\infty$.
\end{lem}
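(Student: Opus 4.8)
The plan is to exploit the defining property of a special $\X$-presented module: by definition $K_{n-1} = \Im(F_{n-1} \to F_{n-2})$ comes equipped with a special short exact sequence $0 \to K_n \to F_n \to K_{n-1} \to 0$ in which $F_n$ is finitely generated free, and $K_n$ is finitely generated. Since $R$ is left $n$-$\X$-coherent, the ambient module $U \in \X_n$ is in fact $(n+1)$-presented, which forces $K_n$ to be finitely presented (this is exactly the observation already used in the proof of Proposition~\ref{3.8bb}). So we have a presentation $0 \to K_n \to F_n \to K_{n-1} \to 0$ with both $K_n$ and $F_n$ finitely presented and $F_n$ free of finite rank.

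The key reduction is then: a finitely presented module of finite flat dimension over any ring has finite projective dimension. First I would record that $\fd_R(K_n) < \infty$: from the short exact sequence $0 \to K_n \to F_n \to K_{n-1} \to 0$ with $F_n$ free (hence flat) and $\fd_R(K_{n-1}) < \infty$ by hypothesis, the standard dimension-shifting inequality gives $\fd_R(K_n) \leq \max\{\fd_R(K_{n-1}) - 1, 0\} < \infty$ (or one uses the long exact sequence in $\Tor$ directly). Now $K_n$ is finitely presented of finite flat dimension, so by the classical fact that for finitely presented modules flat dimension equals projective dimension (a finitely presented flat module is projective, and one induces on the flat dimension via a presentation, each syzygy remaining finitely presented because $R$-modules that are kernels in exact sequences of finitely presented modules stay finitely presented in this setting — here the relevant syzygies sit inside free modules of finite rank), we conclude $\pd_R(K_n) < \infty$. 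Finally, from $0 \to K_n \to F_n \to K_{n-1} \to 0$ with $F_n$ free and $\pd_R(K_n) < \infty$, dimension shifting yields $\pd_R(K_{n-1}) \leq \pd_R(K_n) + 1 < \infty$.

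The step I expect to be the main obstacle is justifying that finitely presented $+$ finite flat dimension implies finite projective dimension \emph{at the level of generality of this paper}, i.e.\ with no coherence or Noetherian hypothesis on $R$ itself beyond $n$-$\X$-coherence. The cleanest route is to observe that $K_n$, being finitely generated and a submodule of the finitely generated free module $F_n$, sits in an exact sequence $0 \to K_n \to F_n \to K_{n-1} \to 0$; its first syzygy relative to a finite free presentation is again finitely generated, and one argues by induction on $d = \fd_R(K_n)$: if $d = 0$ then $K_n$ is finitely presented and flat, hence projective (Villamayor / the standard result that finitely presented flat modules are projective); if $d \geq 1$, take $0 \to L \to P_0 \to K_n \to 0$ with $P_0$ finitely generated free, so $L$ is finitely presented with $\fd_R(L) = d - 1$, whence $\pd_R(L) < \infty$ by induction and therefore $\pd_R(K_n) \leq \pd_R(L) + 1 < \infty$. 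Threading this through the two dimension-shifting steps above then completes the argument; alternatively one may simply cite the well-known result (e.g.\ from Glaz's or Lam's book) that $\fd_R(M) = \pd_R(M)$ for every finitely presented $R$-module $M$ of finite flat dimension, which makes the proof a two-line consequence.
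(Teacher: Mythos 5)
Your opening and closing steps are fine and are in the same spirit as the paper: using the $n$-$\X$-coherence to see that $U$ is $(n+1)$-presented, hence that $K_n$ is finitely presented, and then shifting dimensions along $0\to K_n\to F_n\to K_{n-1}\to 0$. The gap is exactly at the point you flagged as the main obstacle: the claim that a finitely presented module of finite flat dimension over an \emph{arbitrary} ring has finite projective dimension. In your induction step you take $0\to L\to P_0\to K_n\to 0$ with $P_0$ finitely generated free and assert that ``$L$ is finitely presented''; but since $K_n$ is only known to be finitely presented (i.e.\ $1$-presented), all this gives is that $L$ is finitely \emph{generated}. Finite presentation of $L$ would require $K_n$ to be $2$-presented, i.e.\ $U$ to be $(n+2)$-presented, and the $n$-$\X$-coherence hypothesis has already been spent in upgrading $U$ to $(n+1)$-presented; it cannot be re-applied to syzygies of $U$, which need not belong to $\X$. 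So the induction hypothesis cannot be invoked for $L$, and the fallback citation does not repair this: the equality $\fd_R(M)=\pd_R(M)$ for finitely presented $M$ of finite flat dimension is a \emph{coherent-ring} theorem (coherence is what keeps every syzygy finitely presented). Over a general ring a finitely generated flat module need not be projective, and nothing in your argument rules out $L$ being such a module, in which case $\pd_R(K_n)$ could fail to be finite even though $\fd_R(K_n)\le 1$.

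The paper's proof avoids this by never starting a fresh resolution of $K_n$: it keeps resolving $U$ itself by finitely generated free modules (this is where the $n$-$\X$-coherence is invoked), so that the $(n+m-1)$-st syzygy of $U$, where $m=\fd_R(K_{n-1})$, is simultaneously finitely presented and flat (flatness by \cite[Proposition 8.17]{Rot2}, since $\fd_R(U)\le n+m$), hence projective by \cite[Theorem 3.56]{Rot2}; this yields $\pd_R(U)\le n+m$ and therefore $\pd_R(K_{n-1})\le m$. To salvage your write-up, replace the unconditional ``finitely presented $+$ finite flat dimension $\Rightarrow$ finite projective dimension'' by this mechanism: keep all syzygies attached to the finitely generated free resolution of $U$ supplied by coherence, rather than to an arbitrarily chosen presentation of $K_n$, and apply ``finitely presented flat $\Rightarrow$ projective'' once, at the flat syzygy.
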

\begin{proof}
{ If ${\rm fd}_{R}(K_{n-1})=m<\infty$, then there exists $U\in\X_n$ such that ${\rm fd}_{R}(U)\leq n+m$. We 
show that ${\rm pd}_{R}(U)\leq n+m$. Since $R$ is $n$-$\X$-coherent, the projective resolution 
$\cdots \rightarrow F_{n+1}\rightarrow F_{n}\rightarrow\cdots \rightarrow F_{0}\rightarrow U\rightarrow 0,$ where any $F_i$ is finitely generated free, exists. On the other hand, the above exact sequence  is a flat resolution.  So by \cite[Proposition 8.17]{Rot2},  $(n+m-1)$-syzygy is flat. Hence, the exact sequence
$0\rightarrow K_{n+m-1}\rightarrow F_{n+m-1}\rightarrow\cdots \rightarrow F_{0}\rightarrow U\rightarrow 0$ is a flat resolution. Now, a simple observation shows that if $n\geq m$ or $n<m$, $K_{n+m-1}$ is finitely presented and consequently by \cite[Theorem 3.56]{Rot2}, $K_{n+m-1}$ is projective and therefore, ${\rm pd}_{R}(U)\leq n+ m$ if and only if ${\rm pd}_{R}(K_{n-1})\leq m$. }
\end{proof} 

In the following theorem, we show that in the case of left $n$-$\X$-coherent rings,  Gorenstein $n$-$\X$-flat and Gorenstein $n$-$\X$-injective are determined via only the existence  of the corresponding exact  complexes.

\begin{thm}\label{2.3}
Let  $R$ be a left $n$-$\X$-coherent ring and  $\X$ be a class of   $R$-modules. Then:
\begin{enumerate}
\item [\rm (1)]
 A  right $R$-module $G$ is Gorenstein $n$-$\X$-flat if and only if there is an exact sequence 
$${\mathbf{F}}= \cdots\longrightarrow F_1\longrightarrow F_{0}\longrightarrow F^0\longrightarrow
F^1\longrightarrow\cdots$$ of  $n$-$\X$-flat right $R$-modules  such that $G=\ker(F^0\rightarrow F^1)$.
\item [\rm (2)]
An $R$-module $G$ is Gorenstein $n$-$\X$-injective  if and only if there is an exact sequence
$${\mathbf{A}}= \cdots\longrightarrow A_1\longrightarrow A_{0}\longrightarrow A^0\longrightarrow
A^1\longrightarrow\cdots$$
of $n$-$\X$-injective   $R$-modules such that $G=\ker(A^0\rightarrow A^1)$.
\end{enumerate}
\end{thm}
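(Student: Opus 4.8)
The plan is to prove both statements by the same strategy: the forward implications are trivial (the defining complex in Definition~\ref{2.76} is already such an exact sequence), so the real content is the converse, namely that over a left $n$-$\X$-coherent ring the mere existence of an exact complex of $n$-$\X$-injective (resp.\ $n$-$\X$-flat) modules with $G$ as the indicated kernel already forces $\Hom_R(K_{n-1},\mathbf{A})$ (resp.\ $\mathbf{F}\otimes_R K_{n-1}$) to remain exact for every special $\X$-presented $K_{n-1}$ of finite projective (resp.\ flat) dimension. I would treat part~(2) first and then explain how part~(1) follows by the same argument applied on the other side, using the duality $(-)^{*}=\Hom_{\Z}(-,\Q/\Z)$ together with Lemma~\ref{lm} to pass between $n$-$\X$-injectivity and $n$-$\X$-flatness, exactly as in the proof of Proposition~\ref{3.8iu}.

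For part~(2), let $\mathbf{A}=\cdots\to A_1\to A_0\to A^0\to A^1\to\cdots$ be an exact complex of $n$-$\X$-injective modules with $G=\ker(A^0\to A^1)$, and let $K_{n-1}$ be special $\X$-presented with $\pd_R(K_{n-1})=d<\infty$. Split $\mathbf{A}$ into short exact sequences $0\to Z_{i+1}\to A_i\to Z_i\to 0$ and similarly on the cohomological side; by Remark~\ref{2}(3) all these syzygies are again Gorenstein $n$-$\X$-injective once we know the result, but for the argument itself what matters is that each $A_i$, being $n$-$\X$-injective, satisfies $\Ext_R^{j}(K_{n-1},A_i)=0$ for $j=1$ by the isomorphism $\Ext_R^{1}(K_{n-1},A_i)\cong\Ext_R^{n}(U,A_i)=0$ recorded before Proposition~\ref{3.8nn}; moreover, since $R$ is $n$-$\X$-coherent, Proposition~\ref{3.8bb}-type reasoning shows $K_{n-1}$ is even special $\X$-presented with the special generator $K_n$ also finitely presented, which lets one iterate and obtain $\Ext_R^{j}(K_{n-1},A_i)=0$ for all $j\geq 1$ (each $n$-$\X$-injective module is, relative to the finitely presented modules of finite projective dimension, $\Ext$-acyclic in positive degrees — this is where the coherence hypothesis and Lemma~\ref{2.09} are used). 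Then a standard dimension-shifting argument on $d$: when $d=0$, $K_{n-1}$ is projective and $\Hom_R(K_{n-1},-)$ is exact on any exact sequence; for the inductive step, pick a short exact sequence $0\to K'\to P\to K_{n-1}\to 0$ with $P$ finitely generated projective and $\pd_R(K')=d-1$, apply $\Hom_R(-,\mathbf{A})$, and compare the resulting long exact cohomology sequences, using the vanishing of $\Ext_R^{\geq1}(K_{n-1},A_i)$ and the inductive hypothesis for $K'$ to conclude that $\Hom_R(K_{n-1},\mathbf{A})$ is exact.

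For part~(1), I would run the mirror argument: given an exact complex $\mathbf{F}$ of $n$-$\X$-flat right $R$-modules with $G=\ker(F^0\to F^1)$ and $K_{n-1}$ special $\X$-presented with $\fd_R(K_{n-1})<\infty$, Lemma~\ref{2.09} gives $\pd_R(K_{n-1})<\infty$ as well, and the isomorphism $\Tor_1^R(F_i,K_{n-1})\cong\Tor_n^R(F_i,U)=0$ together with $n$-$\X$-coherence yields $\Tor_j^R(F_i,K_{n-1})=0$ for all $j\geq1$. Dimension-shifting on $\fd_R(K_{n-1})$ — or, more slickly, dualizing: $\mathbf{F}\otimes_R K_{n-1}$ is exact if and only if $\Hom_R(K_{n-1},\mathbf{F}^{*})\cong(\mathbf{F}\otimes_R K_{n-1})^{*}$ is exact, and $\mathbf{F}^{*}$ is an exact complex of $n$-$\X$-injective left modules by \cite[Lemma 2.8]{D.Benn}, so part~(2) applies directly once one checks $\pd_R(K_{n-1})<\infty$, which is Lemma~\ref{2.09} — gives the claim. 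The main obstacle is the higher $\Ext$/$\Tor$ vanishing $\Ext_R^{\geq1}(K_{n-1},A_i)=0$: $n$-$\X$-injectivity only gives the degree-$1$ (equivalently degree-$n$) vanishing directly, and upgrading this to all positive degrees is precisely what forces the left $n$-$\X$-coherence hypothesis, since one needs the special $\X$-generated syzygy $K_n$ to again be finitely presented so that the bookkeeping of syzygies of $K_{n-1}$ stays inside the class where the vanishing is available; I would flag this step and cite Lemma~\ref{2.09} and Proposition~\ref{3.8bb} as the tools that make it go through.
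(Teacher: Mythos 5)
Your overall skeleton (forward direction trivial; for the converse reduce to finite projective dimension via Lemma~\ref{2.09} and argue by induction on that dimension, with coherence keeping the relevant syzygy inside the class) is the same as the paper's, and your character-module reduction of part (1) to part (2), via $(\mathbf{F}\otimes_R K_{n-1})^{*}\cong\Hom_R(K_{n-1},\mathbf{F}^{*})$ and \cite[Lemma 2.8]{D.Benn}, is a perfectly good variant of the paper's ``(1) directly, (2) is similar''. However, the execution of the key converse step has a genuine gap. First, the vanishing $\Ext_R^{j}(K_{n-1},A_i)=0$ for \emph{all} $j\geq 1$ does not follow from the hypotheses: since $K_{n-1}\cong\Omega^{n-1}U$, this amounts to $\Ext_R^{n+j-1}(U,A_i)=0$ for all $j$, but $n$-$\X$-injectivity gives only degree $n$, and left $n$-$\X$-coherence only upgrades $U$ from $n$-presented to $(n+1)$-presented, so at best one more degree is available (via the special syzygy $K_n$); the higher syzygies of $U$ need not be $m$-presented for $m>n+1$, and more importantly need not lie in $\X$ at all, so there is no class to ``iterate'' in for a general $\X$. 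Your dimension-shift along the complex therefore breaks down as soon as $\pd_R(K_{n-1})\geq 2$. Second, in your inductive step you take an arbitrary presentation $0\to K'\to P\to K_{n-1}\to 0$ and invoke ``the inductive hypothesis for $K'$'', but the statement being inducted upon concerns \emph{special $\X$-presented} modules, and an arbitrary syzygy $K'$ has no reason to be special $\X$-presented; nor is $\Ext^1_R(K',A_i)=0$ available for it.

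The paper's proof repairs exactly these two points and needs nothing beyond degree-one vanishing: it inducts on $m=\pd_R(K_{n-1})$ and, in the step, uses the \emph{special} short exact sequence $0\to K_n\to P_n\to K_{n-1}\to 0$ coming from $U\in\X_n$, where $n$-$\X$-coherence guarantees that $K_n$ is again special $\X$-presented with $\pd_R(K_n)\leq m-1$, so the induction hypothesis applies to it. Applying $\mathbf{F}\otimes_R-$ (resp.\ $\Hom_R(-,\mathbf{A})$) to this sequence gives a short exact sequence of complexes because $\Tor_1^R(F_i,K_{n-1})\cong\Tor_n^R(F_i,U)=0$ (resp.\ $\Ext_R^1(K_{n-1},A_i)\cong\Ext_R^n(U,A_i)=0$) for every term of the complex, and then the long exact homology sequence together with exactness of $\mathbf{F}\otimes_R P_n$ and $\mathbf{F}\otimes_R K_n$ (induction) yields exactness of $\mathbf{F}\otimes_R K_{n-1}$. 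If you replace your two problematic steps by this use of the special syzygy, your argument becomes the paper's; as written, the all-degrees $\Ext$/$\Tor$ vanishing and the appeal to the inductive hypothesis for an arbitrary $K'$ are unjustified.
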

\begin{proof}
{(1) ($\Longrightarrow$) follows by  definition.

($\Longleftarrow$)   By definition, it suffices to show that $\mathbf{F}\otimes_{R}K_{n-1}$ is exact for
every  special $\X$-presented $K_{n-1}$ with ${\rm fd}_R(K_{n-1})<\infty$. By Lemma \ref{2.09}, ${\rm pd}_{R} (K_{n-1})< \infty$.  Let ${\rm pd}_{R} (K_{n-1}) = m $. We prove  by  induction on $m$. The case $m =0$ is clear. Assume that $m \geq 1$. There exists a   special exact sequence
$0\rightarrow K_n\rightarrow P_{n-1}\rightarrow K_{n-1} \rightarrow 0$ of $U\in\X_n$, where $P_{n}$ is projective finitely generated. Now, from the $n$-$\X$-coherence of $R$, we deduce that $K_n$ is special $\X$-presented. Also, ${\rm pd}_{R}(K_n) \leq m-1$. So, the following short exact sequence of complexes exists:
\begin{center}
$
\begin{array}{ccccccccc}
& \vdots&\vdots &\vdots&\\
& \downarrow & \downarrow &\downarrow & \\
0 \longrightarrow &F_1\otimes_{R}K_n & \longrightarrow F_{1}\otimes_{R}P_{n-1}&\longrightarrow F_1\otimes_{R}K_{n-1}\longrightarrow 0 \\
& \downarrow & \downarrow &\downarrow & \\
0 \longrightarrow &F_0\otimes_{R}K_n & \longrightarrow F_0\otimes_{R}P_{n-1}&\longrightarrow F_0\otimes_{R}K_{n-1}\longrightarrow 0 \\
& \downarrow &\downarrow &\downarrow & \\
0 \longrightarrow &F^0\otimes_{R}K_n & \longrightarrow F^0\otimes_{R}P_{n-1}&\longrightarrow F^0\otimes_{R}K_{n-1}\longrightarrow 0 \\
& \downarrow &\downarrow &\downarrow & \\
0 \longrightarrow &F^1\otimes_{R}K_n & \longrightarrow F^1\otimes_{R}P_{n-1}&\longrightarrow F^1\otimes_{R}K_{n-1}\longrightarrow 0 \\
& \downarrow &\downarrow &\downarrow & \\
& \vdots&\vdots &\vdots&\\
& \parallel & \parallel &\parallel& \\
0 \longrightarrow &{\mathbf{F}}\otimes_{R}K_n& \longrightarrow {\mathbf{F}}\otimes_{R}P_{n-1}&\longrightarrow {\mathbf{F}}\otimes_{R}K_{n-1}\longrightarrow 0. \\

\end{array}
$
\end{center}
\noindent By induction, ${\mathbf{F}}\otimes_{R}P_{n-1}$ and ${\mathbf{F}}\otimes_{R}K_n$ are exact, hence ${\mathbf{F}}\otimes_{R}K_{n-1}$ is exact by \cite[Theorem 6.10]{Rot2}. 

(2) ($\Longrightarrow$)  This is a direct consequence of the definition.

($\Longleftarrow$) Let $K_{n-1}$ be  a special $\X$-presented $R$-module with ${\rm pd}_{R} (K_{n-1})< \infty$. Then, similar proof to that of (1) shows that ${\rm Hom}_{R}(K_{n-1},{\mathbf{A}})$ is exact and hence $G$ is Gorenstein $n$-$\X$-injective.
}
\end{proof}

\begin{cor}\label{2.h}
Let $R$ be a left $n$-$\X$-coherent ring and $\X$ be a class of   $R$-modules. Then, for any   $R$-module $G$, the following assertions are equivalent:
\begin{enumerate}
\item [\rm (1)]
$G$ is Gorenstein $n$-$\X$-injective;
\item [\rm (2)]
There is an exact sequence $\cdots\rightarrow A_1\rightarrow A_{0}\rightarrow G\rightarrow 0$ of   $R$-modules, where every $A_i$ is $n$-$\X$-injective;
\item [\rm (3)]
There is a short exact sequence $0\rightarrow L\rightarrow M\rightarrow G\rightarrow 0$ of   $R$-modules, where
$M$ is $n$-$\X$-injective and $L $ is Gorenstein $n$-$\X$-injective.
\end{enumerate}
\end{cor}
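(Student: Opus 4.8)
The plan is to prove the cycle of implications $(1)\Rightarrow(2)\Rightarrow(3)\Rightarrow(1)$, leaning heavily on Theorem \ref{2.3}(2), which frees us from having to track the $\Hom_R(K_{n-1},-)$-exactness condition and reduces Gorenstein $n$-$\X$-injectivity to the mere existence of a totally acyclic-type complex of $n$-$\X$-injectives with $G$ as a cocycle.

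For $(1)\Rightarrow(2)$: if $G$ is Gorenstein $n$-$\X$-injective, then by definition there is an exact complex $\mathbf{A}=\cdots\to A_1\to A_0\to A^0\to A^1\to\cdots$ of $n$-$\X$-injective modules with $G=\ker(A^0\to A^1)$. First I would truncate this complex: since the sequence is exact, $\cdots\to A_1\to A_0\to G\to 0$ is an exact resolution of $G$ by $n$-$\X$-injective modules, where the map $A_0\to G$ is the corestriction of $A_0\to A^0$ onto its image $G$. That is exactly statement (2).

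For $(2)\Rightarrow(3)$: given $\cdots\to A_1\to A_0\to G\to 0$ with each $A_i$ being $n$-$\X$-injective, set $M=A_0$ and let $L=\ker(A_0\to G)$, giving the short exact sequence $0\to L\to M\to G\to 0$ with $M$ $n$-$\X$-injective. It remains to see $L$ is Gorenstein $n$-$\X$-injective. For this I would invoke Theorem \ref{2.3}(2): one needs an exact complex of $n$-$\X$-injectives having $L$ as a cocycle. The left half is already available — $\cdots\to A_2\to A_1\to L\to 0$ is exact — so I need to splice in a coresolution of $L$ by $n$-$\X$-injectives to the right. The cleanest way is the standard Gorenstein trick: $G$ itself is Gorenstein $n$-$\X$-injective, so (using Theorem \ref{2.3}(2) on $G$) there is a complete complex $\mathbf{A}'$ of $n$-$\X$-injectives with $G$ a cocycle; by Remark \ref{2}(3) the relevant cocycles are again Gorenstein $n$-$\X$-injective, and in particular $G$ embeds in an $n$-$\X$-injective $A^0$ with $\ker(A^0\to A^1)=G$ and a complete exact complex beyond. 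Pasting $\cdots\to A_1\to L\to 0$ (an $n$-$\X$-injective resolution of $L$, noting $A_0=M$ is not used here; rather $L\hookrightarrow A_0$ is the connecting map into the right-hand side via $0\to L\to A_0\to G\to 0$ followed by $G\hookrightarrow A^0\to A^1\to\cdots$) produces an exact complex $\cdots\to A_2\to A_1\to A_0\to A^0\to A^1\to\cdots$ of $n$-$\X$-injectives with $L=\ker(A_1\to A_0)$ — wait, more carefully, with $L=\ker(A_0\to A^0)$ after re-indexing, i.e. $L$ is a cocycle of an exact complex of $n$-$\X$-injectives. By Theorem \ref{2.3}(2), $L$ is Gorenstein $n$-$\X$-injective.

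For $(3)\Rightarrow(1)$: this is the direction I expect to be the genuine obstacle, and it is where $n$-$\X$-coherence and Theorem \ref{2.3}(2) do the real work. Given $0\to L\to M\to G\to 0$ with $M$ $n$-$\X$-injective and $L$ Gorenstein $n$-$\X$-injective, by Theorem \ref{2.3}(2) applied to $L$ there is an exact complex $\cdots\to B_1\to B_0\to B^0\to B^1\to\cdots$ of $n$-$\X$-injectives with $L=\ker(B^0\to B^1)$. The left part $\cdots\to B_1\to B_0\to L\to 0$ is an $n$-$\X$-injective resolution of $L$; composing $B_0\to L\hookrightarrow M$ (and noting $M$ is $n$-$\X$-injective) I would splice to get an exact sequence $\cdots\to B_1\to B_0\to M\to G\to 0$ of $n$-$\X$-injectives resolving $G$ on the left. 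For the right side I need an $n$-$\X$-injective coresolution of $G$; here I would argue that the class of $n$-$\X$-injective modules admits enough such coresolutions in the needed sense — using that $E(G)$ is injective hence $n$-$\X$-injective (Remark \ref{21}(2)), and iterating — although the more careful route, to keep the complex exact and the cocycle condition clean, is to observe that the quotient $M/L\cong G$ together with the embedding $G\hookrightarrow E(G)$ and then continuing to coresolve $E(G)/G$ yields the right half $G\to E(G)\to E(E(G)/G)\to\cdots$; splicing gives a full exact complex of $n$-$\X$-injectives with $G$ as the cocycle $\ker(E(G)\to E(E(G)/G))$, i.e. shifting indices, $G=\ker(A^0\to A^1)$. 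Then Theorem \ref{2.3}(2) applies to conclude $G$ is Gorenstein $n$-$\X$-injective. The subtle point throughout is ensuring that the spliced complex is honestly exact at the splice points (which follows because the connecting maps $B_0\twoheadrightarrow L\hookrightarrow M$ and $M\twoheadrightarrow G\hookrightarrow E(G)$ are composites of an epi onto, and a mono of, the intermediate term, so the image/kernel match up), and that we never need the $\Hom_R(K_{n-1},-)$-exactness by hand — that is precisely the content of Theorem \ref{2.3}(2) and is why left $n$-$\X$-coherence is hypothesized.
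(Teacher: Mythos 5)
Your strategy is the same in spirit as the paper's --- everything is funneled through Theorem \ref{2.3}(2), which is also exactly how the paper uses the left $n$-$\X$-coherence hypothesis --- but your cycle $(1)\Rightarrow(2)\Rightarrow(3)\Rightarrow(1)$ differs from the paper's scheme, which proves $(1)\Rightarrow(2)$, $(1)\Rightarrow(3)$, $(2)\Rightarrow(1)$ (injective coresolution of $G$, each term $n$-$\X$-injective by Remark \ref{21}(2), then Theorem \ref{2.3}) and $(3)\Rightarrow(2)$ (splice a truncated complete complex of $L$ with the short exact sequence). The one genuine flaw is in your step $(2)\Rightarrow(3)$: to build the right half of a complex for $L=\ker(A_0\rightarrow G)$ you invoke ``$G$ itself is Gorenstein $n$-$\X$-injective,'' i.e.\ statement (1), which in your cyclic scheme is not yet available from (2) alone; as written this is circular.

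The repair is immediate and uses only tools you already deploy. Either first establish $(2)\Rightarrow(1)$ --- coresolve $G$ by injectives, splice with the given left resolution, and apply Theorem \ref{2.3}(2), which is precisely your own $(3)\Rightarrow(1)$ argument and the paper's --- and then your $(2)\Rightarrow(3)$ goes through verbatim; or, more directly, bypass $G$ altogether: splice $\cdots\rightarrow A_2\rightarrow A_1\rightarrow L\rightarrow 0$ with an injective coresolution $0\rightarrow L\rightarrow E^0\rightarrow E^1\rightarrow\cdots$ (injectives are $n$-$\X$-injective) to exhibit $L$ as a cocycle of an exact complex of $n$-$\X$-injectives, and apply Theorem \ref{2.3}(2) to $L$. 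With either fix your proof is correct; note that the paper's choice of implications is slightly lighter, since its $(3)\Rightarrow(2)$ never has to certify that a newly constructed kernel is Gorenstein, whereas your $(2)\Rightarrow(3)$ does. Your splice-point exactness checks ($B_0\twoheadrightarrow L\hookrightarrow M$, $M\twoheadrightarrow G\hookrightarrow E(G)$) are correct and match what the paper leaves implicit.
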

\begin{proof}
{
$(1)\Longrightarrow (2)$ and $(1)\Longrightarrow (3)$ follow from the definition.

$(2)\Longrightarrow (1)$There is an exact sequence
$$0\longrightarrow G\longrightarrow I^{0}\longrightarrow I^1\longrightarrow \cdots$$
where every $I^{i}$ is injective for any $i\geq 0$. By Remark \ref{21}, each $I^{i}$ is $n$-$\X$-injective.  So, an exact sequence
$$ \cdots\longrightarrow A_1\longrightarrow A_{0}\longrightarrow I^0\longrightarrow
I^1\longrightarrow\cdots$$
of $n$-$\X$-injective   modules  exists, where $G={\rm ker}(I^0\rightarrow I^1)$. Therefore, $G$ is Gorenstein $n$-$\X$-injective by Theorem \ref{2.3}.

$(3)\Longrightarrow (2)$ Assume there is an exact sequence
$$ 0\longrightarrow L\longrightarrow M \longrightarrow G\longrightarrow 0, \ \ (1)$$  where $M$ is $n$-$\X$-injective and $L $ is Gorenstein $n$-$\X$-injective. Since $L$ is Gorenstein $n$-$\X$-injective, there is an exact sequence
$$\cdots\longrightarrow A_2^{'}\longrightarrow A_1^{'}\longrightarrow A_0^{'}\longrightarrow L\longrightarrow 0,\ \ (2)$$
where every $A_i^{'}$ is $n$-$\X$-injective.
Assembling the sequences $(1)$ and $(2)$, we
get the exact sequence 
$$\cdots\longrightarrow A_2^{'}\longrightarrow A_1^{'}\longrightarrow A_0^{'}\longrightarrow M\longrightarrow G\longrightarrow 0,$$ where $M$ and $A_i^{'}$ are $n$-$\X$-injective,
 as desired.}
\end{proof}

\begin{cor}\label{2.ty}
Let $R$ be a left $n$-$\X$-coherent ring and $\X$ be a class of   $R$-modules. Then, for any right $R$-module $G$,
the following assertions are equivalent:
\begin{enumerate}
\item [\rm (1)]
$G$ is  Gorenstein $n$-$\X$-flat;
\item [\rm (2)]
There is an exact sequence $0\rightarrow G\rightarrow B^{0}\rightarrow B^{1}\rightarrow \cdots$ of right $R$-modules, where every $B^i$ is $n$-$\X$-flat;
\item [\rm (3)]
There is a short exact sequence $0\rightarrow G\rightarrow M\rightarrow L\rightarrow 0$ of right $R$-modules, where
$M$ is $n$-$\X$-flat and $L $ is Gorenstein $n$-$\X$-flat.
\end{enumerate}
\end{cor}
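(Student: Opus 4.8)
The plan is to mirror the proof of Corollary~\ref{2.h}, with projective resolutions (which consist of flat, hence $n$-$\X$-flat, modules by Remark~\ref{21}(1)) playing the role there played by injective resolutions, and with the flat part Theorem~\ref{2.3}(1) replacing the injective part Theorem~\ref{2.3}(2). The implications $(1)\Rightarrow(2)$ and $(1)\Rightarrow(3)$ should come directly from Definition~\ref{2.76}(2): if $\mathbf{F}=\cdots\to F_1\to F_0\to F^0\to F^1\to\cdots$ is a complex of $n$-$\X$-flat right $R$-modules with $G=\ker(F^0\to F^1)$, then its right half $0\to G\to F^0\to F^1\to\cdots$ gives (2), while the short exact sequence $0\to G\to F^0\to K^1\to 0$ with $K^1=\ker(F^1\to F^2)$ gives (3), since $F^0$ is $n$-$\X$-flat and $K^1$ is Gorenstein $n$-$\X$-flat by Remark~\ref{2}(3).

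For $(2)\Rightarrow(1)$ I would pick a projective resolution $\cdots\to P_1\to P_0\to G\to 0$, observe that each $P_i$ is $n$-$\X$-flat, and splice it with the given coresolution $0\to G\to B^0\to B^1\to\cdots$ along the epimorphism $P_0\twoheadrightarrow G$ and the monomorphism $G\hookrightarrow B^0$. This yields an exact complex $\cdots\to P_1\to P_0\to B^0\to B^1\to\cdots$ of $n$-$\X$-flat right $R$-modules with $G=\ker(B^0\to B^1)$, so Theorem~\ref{2.3}(1) (which is exactly where the left $n$-$\X$-coherence of $R$, via Lemma~\ref{2.09}, enters) lets us conclude that $G$ is Gorenstein $n$-$\X$-flat. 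I expect this appeal to Theorem~\ref{2.3}(1) to be the only step with real content; everything else is diagram chasing.

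Finally, for $(3)\Rightarrow(2)$, given $0\to G\to M\to L\to 0$ with $M$ being $n$-$\X$-flat and $L$ Gorenstein $n$-$\X$-flat, I would apply the already-established implication $(1)\Rightarrow(2)$ to $L$ to get an exact sequence $0\to L\to B^0\to B^1\to\cdots$ with every $B^i$ being $n$-$\X$-flat, and then splice the surjection $M\twoheadrightarrow L$ with the injection $L\hookrightarrow B^0$ to form $0\to G\to M\to B^0\to B^1\to\cdots$. Exactness at $M$ holds because $\ker(M\to B^0)=\ker(M\to L)=G$ (as $L\to B^0$ is monic), and exactness from $B^0$ onward is inherited from the coresolution of $L$; all terms are $n$-$\X$-flat, which is (2). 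This closes the cycle $(1)\Rightarrow(2)\Rightarrow(1)$ together with $(1)\Rightarrow(3)\Rightarrow(2)$, giving the asserted equivalence.
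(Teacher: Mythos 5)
Your proposal is correct and follows essentially the same route as the paper: $(1)\Rightarrow(2),(3)$ from the defining complex, $(2)\Rightarrow(1)$ by splicing a flat (projective) resolution onto the given coresolution and invoking Theorem \ref{2.3}(1), and $(3)\Rightarrow(2)$ by splicing the coresolution of $L$ onto $0\to G\to M\to L\to 0$. The only cosmetic difference is that you spell out the exactness checks and the use of Remark \ref{2}(3) that the paper leaves implicit.
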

\begin{proof}
{
$(1)\Longrightarrow (2)$ and $(1)\Longrightarrow (3)$ follow from the definition.

$(2)\Longrightarrow (1)$ For any right $R$-module $G$, there is an exact sequence
$$\cdots\longrightarrow P_{1}\longrightarrow P_{0}\longrightarrow G\longrightarrow 0, $$
where any $P_{i}$ is flat  for any $i\geq 0$.  By Remark \ref{21}, every $P_i$ is $n$-$\X$-flat. Thus, there is an exact sequence
$$ \cdots\longrightarrow P_1\longrightarrow P_{0}\longrightarrow B^0\longrightarrow
B^1\longrightarrow\cdots$$
of  $n$-$\X$-flat right modules, where $G={\rm ker}(B^0\rightarrow B^1)$. Therefore, by Theorem \ref{2.3}, $G$ is Gorenstein $n$-$\X$-flat. 

$(3)\Longrightarrow (2)$ Assume there is an    exact sequence
$$ 0\longrightarrow G\longrightarrow M \longrightarrow L\longrightarrow 0, \ \ (1)$$   where
$M$  is $n$-$\X$-flat and $L $ is Gorenstein $n$-$\X$-flat. Since $L$ is Gorenstein $n$-$\X$-flat, there is an exact sequence
$$0\longrightarrow L\longrightarrow F^0\longrightarrow F^1\longrightarrow F^2\longrightarrow \cdots,\ \ (2)$$
where every $F^i$ is $n$-$\X$-flat.
Assembling the sequences $(1)$ and $(2)$, we
get the exact sequence
$$0\longrightarrow G\longrightarrow M\longrightarrow F^0\longrightarrow F^1\longrightarrow F^2\longrightarrow \cdots,$$ where $M$ and any $ F^i$ are $n$-$\X$-flat, as desired.
}
\end{proof}

\begin{prop}\label{2.jj}
Let $\X$ be a class of   $R$-modules. Then: 
\begin{enumerate}
\item [\rm (1)]
Every direct product of Gorenstein $n$-$\X$-injective $R$-modules is a Gorenstein $n$-$\X$-injective $R$-module.
\item [\rm (2)]
Every direct sum of Gorenstein $n$-$\X$-flat right $R$-modules is a Gorenstein $n$-$\X$-flat $R$-module.
\end{enumerate}
\end{prop}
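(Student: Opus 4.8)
The plan is to build the complexes demanded by Definition \ref{2.76} for $\prod_{j\in J} G_j$ and $\bigoplus_{j\in J} G_j$ directly from the complexes witnessing the Gorenstein property of the individual modules; in particular, neither $n$-$\X$-coherence nor Theorem \ref{2.3} will be needed, and the argument proceeds straight from the definition.

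For (1), let $\{G_j\}_{j\in J}$ be a family of Gorenstein $n$-$\X$-injective modules, and for each $j$ fix an exact complex $\mathbf{A}_j$ of $n$-$\X$-injective modules with $G_j=\ker(A_j^0\rightarrow A_j^1)$ satisfying Definition \ref{2.76}(1). I would take $\mathbf{A}=\prod_{j\in J}\mathbf{A}_j$, formed degreewise, and then check the following in order. First, $\mathbf{A}$ is exact, because arbitrary direct products are exact in a module category. Second, each term of $\mathbf{A}$ is $n$-$\X$-injective: for $n$-$\X$-injective modules $M_j$ and any $U\in\X_n$ we have $\Ext_R^n\big(U,\prod_j M_j\big)\cong\prod_j\Ext_R^n(U,M_j)=0$, since $\Ext_R^n(U,-)$ commutes with direct products. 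Third, $\ker\big(\prod_j A_j^0\rightarrow\prod_j A_j^1\big)\cong\prod_j\ker(A_j^0\rightarrow A_j^1)=\prod_j G_j$, because kernels commute with products. Finally, for any special $\X$-presented $K_{n-1}$ with $\pd_R(K_{n-1})<\infty$ one has $\Hom_R(K_{n-1},\mathbf{A})\cong\prod_{j\in J}\Hom_R(K_{n-1},\mathbf{A}_j)$, a direct product of exact sequences, hence exact; thus $\prod_j G_j$ is Gorenstein $n$-$\X$-injective.

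For (2), I would argue the formal dual, replacing products by direct sums and $\Hom_R(K_{n-1},-)$ by $-\otimes_R K_{n-1}$. Given a family $\{G_j\}_{j\in J}$ of Gorenstein $n$-$\X$-flat right $R$-modules with witnessing complexes $\mathbf{F}_j$ of $n$-$\X$-flat modules, set $\mathbf{F}=\bigoplus_{j\in J}\mathbf{F}_j$. It is exact because direct sums are exact; each term is $n$-$\X$-flat because $\Tor_n^R\big(\bigoplus_j M_j,U\big)\cong\bigoplus_j\Tor_n^R(M_j,U)=0$ for $n$-$\X$-flat $M_j$ and $U\in\X_n$, as $\Tor_n^R(-,U)$ commutes with direct sums; $\ker$ commutes with direct sums, so $\ker\big(\bigoplus_j F_j^0\rightarrow\bigoplus_j F_j^1\big)=\bigoplus_j G_j$; and for any special $\X$-presented $K_{n-1}$ with $\fd_R(K_{n-1})<\infty$, $\mathbf{F}\otimes_R K_{n-1}\cong\bigoplus_{j\in J}(\mathbf{F}_j\otimes_R K_{n-1})$ is a direct sum of exact sequences, hence exact. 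Therefore $\bigoplus_j G_j$ is Gorenstein $n$-$\X$-flat.

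I do not anticipate a real obstacle. The whole proof rests on two elementary facts — a direct product of $n$-$\X$-injective modules is $n$-$\X$-injective, and a direct sum of $n$-$\X$-flat right modules is $n$-$\X$-flat — each of which reduces to the commutation of $\Ext_R^n$ with products, respectively of $\Tor_n^R$ with direct sums; combined with the exactness of products and direct sums and their compatibility with $\Hom_R$, respectively $\otimes_R$, at the level of complexes. One might also note, in contrast with the other results in this section, that coherence of $R$ plays no role here.
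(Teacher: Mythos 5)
Your proposal is correct and follows essentially the same route as the paper: form the product (resp.\ direct sum) of the witnessing complexes degreewise, note that each term stays $n$-$\X$-injective (resp.\ $n$-$\X$-flat), and use $\Hom_R(K_{n-1},\prod_j\mathbf{A}_j)\cong\prod_j\Hom_R(K_{n-1},\mathbf{A}_j)$ (resp.\ the dual tensor isomorphism) to get exactness. The only cosmetic difference is that the paper cites its reference's closure lemma for products of $n$-$\X$-injectives and sums of $n$-$\X$-flats, whereas you reprove it via commutation of $\Ext_R^n$ with products and $\Tor_n^R$ with sums; and, as you observe, no coherence hypothesis is needed, in agreement with the statement.
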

\begin{proof}
{(1) Let $U\in\X_n$ and    $\{A_i\}_{i\in I}$ be a family of  $n$-$\X$-injective   $R$-modules. Then, by \cite[Lemma 2.7]{D.Benn}, $\prod A_i$ is $n$-$\X$-injective. So, if   $\{G_i\}_{i\in I}$ is a family of Gorenstein $n$-$\X$-injective   $R$-modules, then
 the following corresponding  exact sequences  of $n$-$\X$-injective  $R$-modules
 $${\mathbf{A_i}}=\cdots \longrightarrow  (A_{i})_{1}\longrightarrow (A_{i})_{0}\longrightarrow (A_{i})^{0}\longrightarrow (A_{i})^{1}\longrightarrow\cdots,$$
  where $G_i={\rm ker}( (A_{i})^{0}\rightarrow  (A_{i})^{1})$, induce the following exact sequence of $n$-$\X$-injective $R$-modules:
 $$\prod_{i\in I}{\mathbf{A_i}}=\cdots \longrightarrow  \prod_{i\in I}(A_{i})_{1}\longrightarrow \prod_{i\in I}(A_{i})_{0}\longrightarrow \prod_{i\in I}(A_{i})^{0}\longrightarrow \prod_{i\in I}(A_{i})^{1}\longrightarrow\cdots,$$
  where $\prod_{i\in I} G_i={\rm ker}( \prod _{i\in I}(A_{i})^{0}\rightarrow \prod_{i\in I} (A_{i})^{1})$. If $K_{n-1}$ is special $\X$-presented, then $${\rm Hom}_{R}(K_{n-1},\prod_{i\in I}{\mathbf{A_i}})\cong\prod_{i\in I}{\rm Hom}_{R}(K_{n-1},{\mathbf{A_i}}).$$
  By hypothesis, ${\rm Hom}_{R}(K_{n-1},{\mathbf{A_i}})$ is exact, and consequently $\prod_{i\in I}G_i$ is Gorenstein $n$-$\X$-injective.

(2)
Let $U\in\X_n$ and    $\{I_i\}_{i\in J}$ be a family of  $n$-$\X$-flat right $R$-modules. Then, by \cite[Lemma 2.7]{D.Benn}, $\bigoplus_{i\in J} I_i$ is $n$-$\X$-flat. So, if   $\{G_i\}_{i\in J}$ is a family of Gorenstein $n$-$\X$-flat right $R$-modules, then
 the following corresponding   exact sequences  of  $n$-$\X$-flat  right $R$-modules 
 $${\mathbf{I_i}}=\cdots \longrightarrow  (I_{i})_{1}\longrightarrow (I_{i})_{0}\longrightarrow (I_{i})^{0}\longrightarrow (I_{i})^{1}\longrightarrow\cdots,$$
  where $G_i={\rm ker}( (I_{i})^{0}\rightarrow  (I_{i})^{1})$, induces the following exact sequence of $n$-$\X$-flat right $R$-modules:
 $$\bigoplus_{i\in J}{\mathbf{I_i}}=\cdots \longrightarrow  \bigoplus_{i\in J}(I_{i})_{1}\longrightarrow \bigoplus_{i\in J}(I_{i})_{0}\longrightarrow \bigoplus_{i\in J}(I_{i})^{0}\longrightarrow \bigoplus_{i\in J}(I_{i})^{1}\longrightarrow\cdots,$$
  where $\bigoplus_{i\in J}G_i={\rm ker}( (\bigoplus_{i\in J}I_{i})^{0}\rightarrow  (\bigoplus_{i\in J}I_{i})^{1})$. If $K_{n-1}$ is special $\X$-presented, then $$(\bigoplus_{i\in J}{\mathbf{I_i}}\otimes_{R}K_{n-1})\cong\bigoplus_{i\in J} ({\mathbf{I_i}}\otimes_{R}K_{n-1}).$$
  By hypothesis, $({\mathbf{I_i}}\otimes_{R}K_{n-1})$ is exact, and consequently $\bigoplus_{i\in J}G_i$ is Gorenstein $n$-$\X$-flat.}
\end{proof}
Now, we study the Gorenstein $n$-$\X$-injectivity and Gorenstein $n$-$\X$-flatness of modules in short exact sequences.

\begin{prop}\label{2.7}
Let $R$ be a left $n$-$\X$-coherent ring and $\X$ be a class of   $R$-modules.  Then:
\begin{enumerate}
\item [\rm (1)]
Let $0\rightarrow A\rightarrow G\rightarrow N\rightarrow 0$ be an exact sequence of   $R$-modules. If $A$ and $N$ are Gorenstein $n$-$\X$-injective, then $G$ is Gorenstein $n$-$\X$-injective.
\item [\rm (2)]
Let $0\rightarrow K\rightarrow G\rightarrow B\rightarrow 0$ be an exact sequence of right $R$-modules. If $K$ and $B$ are Gorenstein $n$-$\X$-flat, then $G$ is Gorenstein $n$-$\X$-flat.
\end{enumerate}
\end{prop}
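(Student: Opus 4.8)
The plan is to prove both parts simultaneously using the characterizations established in Corollary \ref{2.h} and Corollary \ref{2.ty}, together with the Horseshoe-type assembly of resolutions. Since the ring is left $n$-$\X$-coherent, Theorem \ref{2.3} tells us that Gorenstein $n$-$\X$-injectivity (resp. flatness) is detected purely by the existence of an exact complex of $n$-$\X$-injective (resp. $n$-$\X$-flat) modules having $G$ as the appropriate kernel; the $\Hom$/tensor exactness conditions come for free. So the burden reduces to building such complexes for the middle term.

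For part (1), I would proceed as follows. Since $A$ and $N$ are Gorenstein $n$-$\X$-injective, by Corollary \ref{2.h}(2) there are exact ``left'' resolutions $\cdots \to A_1 \to A_0 \to A \to 0$ and $\cdots \to N_1 \to N_0 \to N \to 0$ by $n$-$\X$-injective modules. Applying the Horseshoe Lemma to the exact sequence $0 \to A \to G \to N \to 0$ yields an exact sequence $\cdots \to A_1 \oplus N_1 \to A_0 \oplus N_0 \to G \to 0$ with each term $n$-$\X$-injective (direct sums of $n$-$\X$-injectives are $n$-$\X$-injective over an $n$-$\X$-coherent ring by Proposition \ref{3.8bb}). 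This gives half of the two-sided complex. For the other half, I would dualize: since $A$ and $N$ each admit an exact ``right'' resolution by $n$-$\X$-injective modules (part of the defining complex), splicing via the same short exact sequence and the Horseshoe Lemma in the injective direction produces $0 \to G \to A^0 \oplus N^0 \to A^1 \oplus N^1 \to \cdots$ with $n$-$\X$-injective terms. Concatenating the two resolutions at $G$ gives the desired exact complex with $G$ as the kernel of the appropriate map, and Theorem \ref{2.3}(2) finishes it.

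Part (2) is entirely dual. Using Corollary \ref{2.ty}(2), write exact complexes $0 \to K \to B^0_K \to B^1_K \to \cdots$ and $0 \to B \to B^0_B \to \cdots$ by $n$-$\X$-flat modules, apply the Horseshoe Lemma to $0 \to K \to G \to B \to 0$ in the coresolution direction to get $0 \to G \to B^0_K \oplus B^0_B \to \cdots$ with $n$-$\X$-flat terms (direct sums of $n$-$\X$-flats are $n$-$\X$-flat by \cite[Lemma 2.7]{D.Benn}), and dually build the left part $\cdots \to F_1 \to F_0 \to G \to 0$ by $n$-$\X$-flat modules from the flat left-resolutions of $K$ and $B$. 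Splicing at $G$ and invoking Theorem \ref{2.3}(1) gives the result.

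The main obstacle is purely bookkeeping: one must check that the Horseshoe construction can be iterated to produce a genuinely exact two-sided complex (not just exactness degree by degree) and that the kernel at the splice point is exactly $G$. This is standard, but it relies crucially on the $n$-$\X$-coherence hypothesis, which is what legitimizes dropping the $\Hom_R(K_{n-1},-)$ and $-\otimes_R K_{n-1}$ exactness conditions via Theorem \ref{2.3}; without it one would have to verify those conditions directly by a spectral-sequence or dimension-shifting argument on the short exact sequence, which would be considerably more delicate.
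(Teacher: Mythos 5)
Your overall strategy --- reduce, over the $n$-$\X$-coherent ring, to producing resolutions via Theorem \ref{2.3} and Corollaries \ref{2.h}, \ref{2.ty}, and assemble a resolution of $G$ out of resolutions of the two end terms of the short exact sequence --- is the same strategy as the paper's. The problem is the step you dismiss as ``purely bookkeeping'': the Horseshoe Lemma cannot be applied to the resolutions you propose to splice, because their terms are only $n$-$\X$-injective (resp. $n$-$\X$-flat), not projective (resp. injective). To produce $\cdots\to A_1\oplus N_1\to A_0\oplus N_0\to G\to 0$ from $0\to A\to G\to N\to 0$ you must lift the epimorphism $N_0\to N$ through $G\to N$; the obstruction lies in ${\rm Ext}_{R}^{1}(N_0,A)$, and nothing in Definition \ref{2.76} makes it vanish, since the Hom-exactness there is only against special $\X$-presented modules of finite projective dimension, not against $n$-$\X$-injective modules. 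The same difficulty recurs at every stage of the iteration, and again (in dual form) in your ``other half'' of part (1), where extending $A\to A^{0}$ along $A\hookrightarrow G$ would require $A^{0}$ to be injective, and in the coresolution-direction horseshoe of part (2), where extending $K\to B_K^{0}$ along $K\hookrightarrow G$ would require $B_K^{0}$ to be injective. These liftings and extensions are precisely the content that has to be produced; they are not supplied by any lemma you cite.

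Two further points of comparison with the paper. First, the two-sided complex you aim at is more than is needed: by Corollary \ref{2.h}(2) (resp. Corollary \ref{2.ty}(2)) it suffices to build the left resolution of $G$ by $n$-$\X$-injective modules (resp. the right resolution by $n$-$\X$-flat modules), the other half being free of charge from an ordinary injective coresolution (resp. projective resolution) of $G$, whose terms lie in the right class by Remark \ref{21}; so the half of your construction that is hardest to justify is also unnecessary. Second, the paper avoids any iteration: it takes the one-step data of Corollary \ref{2.h}(3), namely $0\to K\to A_0\to A\to 0$ and $0\to L\to N_0\to N\to 0$ with $A_0,N_0$ $n$-$\X$-injective and $K,L$ Gorenstein $n$-$\X$-injective, forms a single $3\times 3$ commutative diagram whose middle column is $0\to K\oplus L\to A_0\oplus N_0\to G\to 0$, and concludes by Corollary \ref{2.h}(3) again, using \cite[Lemma 2.7]{D.Benn} for $A_0\oplus N_0$ and Proposition \ref{2.jj} for $K\oplus L$ (and dually via Corollary \ref{2.ty} for part (2)). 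Thus everything is reduced to one splicing step rather than an infinite, two-directional horseshoe; to repair your proposal you should either restrict to the one-sided resolution and justify the single lifting needed, or recast the argument as the paper's single diagram.
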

\begin{proof}
{
(1)  Since $A$ and $N$ are Gorenstein $n$-$\X$-injective, by Corollary $\ref{2.h}$, there exist exact sequences 
$0\rightarrow K\rightarrow A_{0}\rightarrow A\rightarrow 0$ and $0\rightarrow L\rightarrow N_{0}\rightarrow N\rightarrow 0$ of   $R$-modules, where $A_{0}$ and $N_0$ are $n$-$\X$-injective and also, $K$ and $L$ are Gorenstein $n$-$\X$-injective. 
Now, we consider the following commutative diagram:
\begin{center}
$
\begin{array}{ccccccccc}
 & & 0 & & 0 & &0 & & \\
& & \downarrow & &\downarrow & & \downarrow & & \\
0 &\longrightarrow & K & \longrightarrow & K\oplus L&\longrightarrow &L & \longrightarrow & 0 \\
 & & \downarrow & &\downarrow & & \downarrow & & \\
0 &\longrightarrow & A_0 & \longrightarrow & A_{0}\oplus N_{0} &\longrightarrow &N_0 & \longrightarrow & 0 \\
& & \downarrow & &\downarrow & & \downarrow & & \\
0 & \longrightarrow & A& \longrightarrow & G& \longrightarrow & N & \longrightarrow & 0 \\
& & \downarrow & &\downarrow & & \downarrow & & \\
 & & 0 & & 0 & &0 & & \\
\end{array}
$
\end{center}
The exactness of the middle horizontal sequence where $A_0$ and $N_0$ are $n$-$\X$-injective, implies that $A_{0}\oplus N_{0}$ is $n$-$\X$-injective by \cite[Lemma 2.7]{D.Benn}. Also, $K\oplus L$ is Gorenstein $n$-$\X$-injective by Proposition \ref{2.jj}(1). Hence, from the middle vertical sequence
and Corollary $\ref{2.h}$, we deduce that $G$ is Gorenstein $n$-$\X$-injective.

(2)
  Since $K$ and $B$ are Gorenstein $n$-$\X$-flat, by Corollary $\ref{2.ty}$, there exist exact sequences 
$0\rightarrow K\rightarrow K_{0}\rightarrow L_1\rightarrow 0$ and $0\rightarrow B\rightarrow B_{0}\rightarrow L_{1}^{'}\rightarrow 0$ of $R$-modules, where $K_{0}$ and $B_0$ are $n$-$\X$-flat and also, $L_1$ and $L_{1}^{'}$ are Gorenstein $n$-$\X$-flat. 
Now, we consider the following commutative diagram:
\begin{center}
$
\begin{array}{ccccccccc}
 & & 0 & & 0 & &0 & & \\
& & \downarrow & &\downarrow & & \downarrow & & \\
0 &\longrightarrow & K & \longrightarrow & G&\longrightarrow &B & \longrightarrow & 0 \\
 & & \downarrow & &\downarrow & & \downarrow & & \\
0 &\longrightarrow & K_0 & \longrightarrow & K_{0}\oplus B_{0} &\longrightarrow &B_0 & \longrightarrow & 0 \\
& & \downarrow & &\downarrow & & \downarrow & & \\
0 & \longrightarrow & L_1& \longrightarrow & L_{1}\oplus L_{1}^{'} & \longrightarrow & L_{1}^{'} & \longrightarrow & 0 \\
& & \downarrow & &\downarrow & & \downarrow & & \\
 & & 0 & & 0 & &0 & & \\
\end{array}
$
\end{center}
The exactness of the middle horizontal sequence with $K_0$ and $B_0$ are $n$-$\X$-flat, implies that $K_{0}\oplus B_{0}$ is $n$-$\X$-flat by \cite[Lemma 2.7]{D.Benn}. Also, $L_{1}\oplus L_{1}^{'}$ is Gorenstein $n$-$\X$-flat by Proposition \ref{2.jj}(2). Hence from the middle vertical sequence
and Corollary $\ref{2.ty}$, we deduce that $G$ is Gorenstein $n$-$\X$-flat.
}
\end{proof}
The left $n$-$\X$-injective dimension of an $R$-module $M$, denoted by $\id_{\X_n}(M)$,  is defined to be
 the least non-negative integer $m$ such that $\Ext_R^{n+m+1}(U,M)=0$  for any $U\in \X_n$.  The left $n$-$\X$-flat dimension of a right $R$-module $M$, denoted by $\fd_{\X_n}(M)$, is defined to be
 the least non-negative integer $m$ such that $\Tor_{n+m+1}^R(M,U)=0$  for any $U\in \X_n$. If $G$ is Gorenstein $n$-$\X$-injective, then $\id_{\X_n}(G)=m$ if there is an exact sequence
 $$0\longrightarrow A_{m}\longrightarrow\cdots\longrightarrow A_{1} \longrightarrow A_0
\longrightarrow G \longrightarrow 0$$
or
  an exact sequence
 $$0\longrightarrow G\longrightarrow A^{0} \longrightarrow A^1
\longrightarrow\cdots \longrightarrow A^{m} \longrightarrow 0$$ of $n$-$\X$-injective   $R$-modules.
Similarly, if $G$ is Gorenstein $n$-$\X$-flat and $\fd_{\X_n}(G)=m$, then the above exact sequences for $n$-$\X$-flat right $R$-modules exists.

The following theorems are   generalizations of Corollaries \ref{2.h} and \ref{2.ty} and Proposition \ref{2.7}.

\begin{thm}\label{2.po} 
Let  $R$ be a left $n$-$\X$-coherent ring and $\X$ be a class of   $R$-modules which is closed under kernels of epimorphisms. Then, for every   $R$-module $G$, the following statements are equivalent:
\begin{enumerate}
\item [\rm (1)]
$G$ is Gorenstein $n$-$\X$-injective;
\item [\rm (2)]
There exists an $n$-$\X$-injective resolution of $G$:
$$
\cdots\stackrel{\displaystyle f_{3}} \longrightarrow A_{2}\stackrel{\displaystyle f_{2}}\longrightarrow A_{1}\stackrel{\displaystyle f_{1}} \longrightarrow A_0
\stackrel{\displaystyle f_{0}}\longrightarrow G\longrightarrow 0
$$
 such that $\bigoplus_{i=0}^{\infty}{\rm Im}(f_i)$ is Gorenstein $n$-$\X$-injective;
\item [\rm (3)]
There exists an exact sequence 
$$
\cdots\stackrel{\displaystyle f_{3}} \longrightarrow A_{2}\stackrel{\displaystyle f_{2}}\longrightarrow A_{1}\stackrel{\displaystyle f_{1}} \longrightarrow A_0
\stackrel{\displaystyle f_{0}}\longrightarrow G\longrightarrow 0
$$
 of   $R$-modules, where $A_i$ has finite $n$-$\X$-injective dimension for any $i\geq 0$, such that $\bigoplus_{i=0}^{\infty}{\rm Im}(f_i)$ is Gorenstein $n$-$\X$-injective.
\end{enumerate}
\end{thm}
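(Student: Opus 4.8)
The plan is to prove the cycle of implications $(1)\Rightarrow(2)\Rightarrow(3)\Rightarrow(1)$, since $(2)$ is clearly a special case of $(3)$, and $(1)\Rightarrow(2)$ should come almost for free from the definition plus Remark \ref{2}(3).

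First I would handle $(1)\Rightarrow(2)$. If $G$ is Gorenstein $n$-$\X$-injective, Corollary \ref{2.h}$(1)\Rightarrow(2)$ gives an exact sequence $\cdots\to A_1\to A_0\to G\to 0$ with each $A_i$ being $n$-$\X$-injective; write $f_i$ for the maps and $L_i={\rm Im}(f_i)$, so that $L_0=G$ and each $L_i$ fits in a short exact sequence $0\to L_{i+1}\to A_i\to L_i\to 0$. By Remark \ref{2}(3) (or by repeated use of Corollary \ref{2.h}$(3)\Rightarrow(1)$ applied to these short exact sequences), each $L_i$ is again Gorenstein $n$-$\X$-injective. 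Then $\bigoplus_{i=0}^\infty L_i$ is Gorenstein $n$-$\X$-injective by Proposition \ref{2.jj}(1) — wait, that proposition is stated for direct \emph{products}; so instead I would invoke the following fact, which I would prove as an intermediate step: over a left $n$-$\X$-coherent ring, a direct sum of Gorenstein $n$-$\X$-injective modules is Gorenstein $n$-$\X$-injective. This follows by the same argument as Proposition \ref{2.jj}(1), using that over an $n$-$\X$-coherent ring the relevant $K_n$ is special $\X$-presented (as in Proposition \ref{3.8bb}) and ${\rm Hom}_R(K_n,-)$ commutes with direct sums, together with the fact that a direct sum of $n$-$\X$-injectives is $n$-$\X$-injective (Proposition \ref{3.8bb}). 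This gives $(2)$.

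The step $(2)\Rightarrow(3)$ is immediate: an $n$-$\X$-injective module has $n$-$\X$-injective dimension $0<\infty$, so any $n$-$\X$-injective resolution is in particular a resolution by modules of finite $n$-$\X$-injective dimension.

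The substance is $(3)\Rightarrow(1)$, and this is where I expect the main obstacle. Starting from an exact sequence $\cdots\to A_2\xrightarrow{f_2}A_1\xrightarrow{f_1}A_0\xrightarrow{f_0}G\to 0$ with each $\fdn$ replaced by $\idn(A_i)<\infty$ and $H:=\bigoplus_{i\ge0}{\rm Im}(f_i)$ Gorenstein $n$-$\X$-injective, I would aim to show $G$ is Gorenstein $n$-$\X$-injective by verifying the criterion of Corollary \ref{2.h}(3): produce a short exact sequence $0\to L\to M\to G\to 0$ with $M$ being $n$-$\X$-injective and $L$ Gorenstein $n$-$\X$-injective. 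The idea is to run an induction on $m=\max_i\idn(A_i)$ — but since the $A_i$ need not have uniformly bounded dimension one really inducts on the dimension of $A_0$, peeling off one $n$-$\X$-injective layer at a time while keeping the tail of the same shape. Concretely: write $0\to L_1\to A_0\to G\to 0$ with $L_1={\rm Im}(f_1)$. If $\idn(A_0)=0$ we are done since $L_1$ is a direct summand of $H$ hence Gorenstein $n$-$\X$-injective (this uses that $\X$ is closed under kernels of epimorphisms, which — via the special short exact sequences — ensures the "eventually-$n$-$\X$-injective resolution" machinery behaves; more precisely it is what lets us show a direct summand of a Gorenstein $n$-$\X$-injective module is Gorenstein $n$-$\X$-injective, a point I would need to establish, likely by the standard Eilenberg swindle combined with the direct-sum closure above). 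If $\idn(A_0)=k\ge1$, embed $A_0$ into an $n$-$\X$-injective $E_0$ (e.g. its injective envelope, using Remark \ref{21}) with cokernel $C_0$ of strictly smaller $n$-$\X$-injective dimension, form the pushout to replace the resolution of $G$ by one whose $0$th term $E_0$ is genuinely $n$-$\X$-injective and whose new "$G$" has a resolution of the same type with smaller invariant, and recurse. The closure of $\X$ under kernels of epimorphisms is exactly what keeps all the syzygies appearing in this process special $\X$-presented (cf. the proof of Lemma \ref{2.09} and Theorem \ref{2.3}), so that Corollary \ref{2.h} and Theorem \ref{2.3} remain applicable at each stage.

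The hard part will be bookkeeping the induction in $(3)\Rightarrow(1)$: making precise the dimension-shifting pushout so that at each step the hypothesis "$\bigoplus_i{\rm Im}(f_i)$ Gorenstein $n$-$\X$-injective" is preserved for the new resolution, and justifying that direct summands of Gorenstein $n$-$\X$-injective modules inherit the property (needed to extract $L$ at the base case). Once those two lemmas — direct sums preserve, and direct summands inherit, the Gorenstein $n$-$\X$-injective property over an $n$-$\X$-coherent ring — are in hand, the rest is a routine assembly paralleling the proofs of Corollary \ref{2.h} and Proposition \ref{2.7}.
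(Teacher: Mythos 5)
Your $(1)\Rightarrow(2)$ and $(2)\Rightarrow(3)$ are essentially fine: the direct-sum closure you flag as an intermediate step does hold over a left $n$-$\X$-coherent ring (via Theorem \ref{2.3} plus Proposition \ref{3.8bb}, or because $K_{n-1}$ is finitely presented so ${\rm Hom}_R(K_{n-1},-)$ commutes with direct sums), although the paper gets $(1)\Rightarrow(2)$ more directly by summing the truncated resolutions $\cdots\to A_{i+1}\to A_i\to{\rm Im}(f_i)\to 0$ into a single $n$-$\X$-injective resolution of $\bigoplus_i{\rm Im}(f_i)$ and quoting Corollary \ref{2.h}, with no lemma about sums of Gorenstein modules. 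The genuine gap is $(3)\Rightarrow(1)$: what you give is a plan whose two load-bearing ingredients are exactly the ones you do not supply. First, closure of Gorenstein $n$-$\X$-injectives under direct summands is asserted (``likely by the standard Eilenberg swindle'') but not proved, and it is not a formality in this setting. Second, the pushout induction on $\idn(A_0)$ is left as ``bookkeeping,'' and as described it points the wrong way: pushing $A_0\hookrightarrow E_0$ out along $A_0\to G$ produces cosyzygy-type sequences $0\to G\to P\to C_0\to 0$, whereas Corollary \ref{2.h} needs data of the shape $0\to L\to M\to G\to 0$ with $M$ $n$-$\X$-injective and $L$ Gorenstein $n$-$\X$-injective; moreover you never explain why, after modifying the resolution, the hypothesis that the direct sum of its images is Gorenstein $n$-$\X$-injective (the whole content of (3)) is preserved, nor how the induction handles the terms $A_i$, $i\ge 1$, which also have only finite dimension. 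As written, the induction neither clearly terminates nor clearly uses the hypothesis.

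For contrast, the paper's $(3)\Rightarrow(1)$ avoids both of your missing pieces by reducing to the claim that every $A_i$ is actually $n$-$\X$-injective. It sums the sequences $0\to{\rm Im}(f_{i+1})\to A_i\to{\rm Im}(f_i)\to 0$ and applies Proposition \ref{2.7}(1) together with (3) to conclude that $\bigoplus_i A_i$ is Gorenstein $n$-$\X$-injective; since this module has finite $n$-$\X$-injective dimension, the convention stated before the theorem yields a finite left resolution $0\to B_k\to\cdots\to B_0\to\bigoplus_i A_i\to 0$ by $n$-$\X$-injectives, and a dimension-shifting argument (using that $U\in\X_{n+1}$ by $n$-$\X$-coherence, so ${\rm Ext}^{n+1}_R(U,B_k)=0$) peels this resolution down to show $\bigoplus_i A_i$ is $n$-$\X$-injective; Proposition \ref{3.8bb} then gives that each $A_i$ is $n$-$\X$-injective, and Corollary \ref{2.h}(2) applied to the original resolution finishes. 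Note that once all $A_i$ are known to be $n$-$\X$-injective, no summand lemma for Gorenstein modules is needed at all. If you want to keep your outline, the honest statement is that the summand-closure lemma and the dimension-peeling construction are the proof, not routine assembly; the paper's reduction is the cleaner replacement for both.
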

\begin{proof} 
 $(1)\Longrightarrow (2)$ By Corollary \ref{2.h},  there is an exact sequence
$$
\cdots\stackrel{\displaystyle f_{3}} \longrightarrow A_{2}\stackrel{\displaystyle f_{2}}\longrightarrow A_{1}\stackrel{\displaystyle f_{1}} \longrightarrow A_0
\stackrel{\displaystyle f_{0}}\longrightarrow G\longrightarrow 0,
$$
where every $A_i$ is $n$-$\X$-injective.
Consider the following exact sequences:
$$\cdots \longrightarrow A_{2} \longrightarrow A_{1}  \longrightarrow   A_0
 \longrightarrow  {\rm Im}(f_0) \longrightarrow 0,$$
$$\cdots \longrightarrow A_{3} \longrightarrow A_{2}  \longrightarrow   A_1
 \longrightarrow  {\rm Im}(f_1) \longrightarrow 0,$$
$$    \vdots \ \ \ \ \  \ \ \  \ \ \vdots \ \ \ \ \ \ \ \ \ \ \vdots \ \ \  \ \ \ \ \ \ \ \ \ \ \ \ \vdots$$
 By Proposition \ref{3.8bb}, $\bigoplus_{i\in I}A_i $ is $n$-$\X$-injective. Thus, there exists an exact sequence
$$\cdots \longrightarrow \bigoplus_{i\geq 2}A_i \longrightarrow \bigoplus_{i\geq 1}A_i \longrightarrow   \bigoplus_{i\geq 0}A_i
 \longrightarrow  \bigoplus_{i=0}^{\infty}{\rm Im}(f_i) \longrightarrow 0$$ 
 of $n$-$\X$-injective   $R$-modules. Consequently, Corollary \ref{2.h} implies that $ \bigoplus_{i=0}^{\infty}{\rm Im}(f_i)$
 is Gorenstein $n$-$\X$-injective. 
 
 $(2)\Longrightarrow (3)$ trivial.
 
$(3)\Longrightarrow (1)$ Let 
$$
\cdots\stackrel{\displaystyle f_{3}} \longrightarrow A_{2}\stackrel{\displaystyle f_{2}}\longrightarrow A_{1}\stackrel{\displaystyle f_{1}} \longrightarrow A_0
\stackrel{\displaystyle f_{0}}\longrightarrow G\longrightarrow 0
$$
be  an exact sequence  of   $R$-modules, where $A_i$ has finite $n$-$\X$-injective dimension. By Corollary \ref{2.h}, it is sufficient to prove that $A_i$ is $n$-$\X$-injective for any 
$i\geq0$. Consider, the short exact sequence $0\rightarrow{\rm Im}(f_{i+1})\rightarrow A_i\rightarrow{\rm Im}(f_{i})\rightarrow 0$ for any $i\geq 0$. Therefore, the short exact sequence $0\rightarrow\bigoplus_{i=0}^{\infty}{\rm Im}(f_{i+1})\rightarrow \bigoplus_{i=0}^{\infty}A_i\rightarrow\bigoplus_{i=0}^{\infty}{\rm Im}(f_{i})\rightarrow 0$ exists. By (3) and Proposition \ref{2.7}(1), $\bigoplus_{i=0}^{\infty}A_i$ is Gorenstein $n$-$\X$-injective. Also, $\bigoplus_{i=0}^{\infty}A_i$ has finite $n$-$\X$-injective dimension. If $\id_{\X_n}(\bigoplus_{i=0}^{\infty}A_i)=k$, then there exists an $n$-$\X$-injective resolution of  $\bigoplus_{i=0}^{\infty}A_i$:
$$0\longrightarrow B_{k} \longrightarrow B_{k-1}  \longrightarrow\cdots \longrightarrow B_0
 \longrightarrow  \bigoplus_{i\in I}A_i\longrightarrow 0.$$
Let $L_{k-1}={\rm ker}(B_{k-1} \rightarrow B_{k-2})$ and $U\in\X_n$. Then, the exact sequence 
$0\rightarrow B_{k}\rightarrow B_{k-1}\rightarrow L_{k-1}\rightarrow0$ 
induces the following exact sequence:
$$0={\rm Ext}_{R}^{n}(U,B_{k-1})\longrightarrow {\rm Ext}_{R}^{n}(U,L_{k-1})\longrightarrow {\rm Ext}_{R}^{n+1}(U,B_{k})\longrightarrow\cdots.$$
By hypothesis, $B_{k}$ is $(n+1)$-$\X$-injective, and also $U\in\X_{n+1}$ since $R$ is $n$-$\X$-coherent. So ${\rm Ext}_{R}^{n+1}(U,B_{k})=0$, and hence ${\rm Ext}_{R}^{n}(U,L_{k-1})=0$. Thus, $L_{k-1}$ is $n$-$\X$-injective. Then, with the same process, we get that  $\bigoplus_{i=0}^{\infty}A_i$ is $n$-$\X$-injective, and so by Proposition \ref{3.8bb}, $A_i$ is $n$-$\X$-injective for any $i\geq 0$.
\end{proof}

For the  following theorem, the proof is similar to that of $(1)\Longrightarrow (2)$, $(2)\Longrightarrow (3)$ and $(3)\Longrightarrow (1)$ in Theorem \ref{2.po}.

\begin{thm}\label{3.100} 
Let $R$ be a left $n$-$\X$-coherent ring and $\X$ be a class of   $R$-modules which  is closed under
kernels of epimorphisms. Then, for every  right $R$-module $G$, the following statements are equivalent:
\begin{enumerate}
\item [\rm (1)]
$G$ is Gorenstein $n$-$\X$-flat;
\item [\rm (2)]
There exists the following right $n$-$\X$-flat resolution of $G$:
$$
0 \longrightarrow G\stackrel{\displaystyle f^{0}}\longrightarrow I^{0} \stackrel{\displaystyle f^{1}}\longrightarrow  I^1\stackrel{\displaystyle f^{2}}
\longrightarrow\cdots
$$
 such that $\bigoplus_{i=0}^{\infty}{\rm Im}(f^i)$ is Gorenstein $n$-$\X$-flat;
\item [\rm (3)]
There exists an exact sequence 
$$
0 \longrightarrow G\stackrel{\displaystyle f^{0}}\longrightarrow I^{0} \stackrel{\displaystyle f^{1}}\longrightarrow  I^1\stackrel{\displaystyle f^{2}}
\longrightarrow\cdots
$$
 of right $R$-modules, where $I_i$ has finite $n$-$\X$-flat dimension for any $i\geq 0$,  such that $\bigoplus_{i=0}^{\infty}{\rm Im}(f^i)$ is Gorenstein $n$-$\X$-flat.
\end{enumerate}
\end{thm}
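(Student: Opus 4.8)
The plan is to run the proof of Theorem~\ref{2.po} essentially line by line, interchanging throughout $\Hom_R(K_{n-1},-)$ with $-\otimes_R K_{n-1}$, the functors $\Ext_R^{n}(U,-)$ with $\Tor_n^R(-,U)$, left $n$-$\X$-injective resolutions with right $n$-$\X$-flat coresolutions, Corollary~\ref{2.h} with Corollary~\ref{2.ty}, Proposition~\ref{2.7}(1) with Proposition~\ref{2.7}(2), and Proposition~\ref{3.8bb} (stability of $n$-$\X$-injectivity under arbitrary direct sums over a left $n$-$\X$-coherent ring) with the fact that a direct sum of $n$-$\X$-flat right $R$-modules is again $n$-$\X$-flat, which holds unconditionally because $-\otimes_R U$ commutes with direct sums (see \cite[Lemma~2.7]{D.Benn}). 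Only the bookkeeping with the indices of the coresolutions differs from the injective case.

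For $(1)\Rightarrow(2)$, Corollary~\ref{2.ty} provides an exact sequence $0\to G\to I^{0}\to I^{1}\to\cdots$ of $n$-$\X$-flat right $R$-modules; denote by $f^{i}$ its successive maps, so $\Im(f^{0})\cong G$ and $\Im(f^{j})=\Ker(f^{j+1})\subseteq I^{j}$ for $j\geq 1$. For each $j\geq 0$ the truncation $0\to\Im(f^{j})\to I^{j}\to I^{j+1}\to\cdots$ is exact, so taking the direct sum of these truncations over $j\geq 0$ yields an exact sequence
$$0\longrightarrow\bigoplus_{j\geq0}\Im(f^{j})\longrightarrow\bigoplus_{i\geq0}I^{i}\longrightarrow\bigoplus_{i\geq1}I^{i}\longrightarrow\bigoplus_{i\geq2}I^{i}\longrightarrow\cdots$$
all of whose positive terms are $n$-$\X$-flat; hence $\bigoplus_{j=0}^{\infty}\Im(f^{j})$ is Gorenstein $n$-$\X$-flat by Corollary~\ref{2.ty}. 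The implication $(2)\Rightarrow(3)$ is immediate, since every $n$-$\X$-flat right module has $n$-$\X$-flat dimension zero.

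For $(3)\Rightarrow(1)$, suppose given an exact sequence $0\to G\to I^{0}\to I^{1}\to\cdots$ of right $R$-modules with successive maps $f^{i}$, with $\fdn(I^{i})<\infty$ for all $i$ and $\bigoplus_{i=0}^{\infty}\Im(f^{i})$ Gorenstein $n$-$\X$-flat; by Corollary~\ref{2.ty} it suffices to show each $I^{i}$ is $n$-$\X$-flat. Splitting the coresolution into the short exact sequences $0\to\Im(f^{i})\to I^{i}\to\Im(f^{i+1})\to 0$ and summing over $i\geq 0$ gives
$$0\longrightarrow\bigoplus_{i\geq0}\Im(f^{i})\longrightarrow\bigoplus_{i\geq0}I^{i}\longrightarrow\bigoplus_{i\geq1}\Im(f^{i})\longrightarrow0;$$
the term $\bigoplus_{i\geq1}\Im(f^{i})$ carries the $n$-$\X$-flat coresolution $0\to\bigoplus_{i\geq1}\Im(f^{i})\to\bigoplus_{i\geq1}I^{i}\to\bigoplus_{i\geq2}I^{i}\to\cdots$, so it is Gorenstein $n$-$\X$-flat by Corollary~\ref{2.ty}, and therefore $\bigoplus_{i\geq0}I^{i}$ is Gorenstein $n$-$\X$-flat by Proposition~\ref{2.7}(2). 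Since $\bigoplus_{i\geq0}I^{i}$ also has finite $n$-$\X$-flat dimension, say $k$, it is Gorenstein $n$-$\X$-flat of finite $n$-$\X$-flat dimension and hence admits a finite $n$-$\X$-flat coresolution $0\to\bigoplus_{i\geq0}I^{i}\to B^{0}\to\cdots\to B^{k}\to 0$; setting $C^{j}=\Coker(B^{j-1}\to B^{j})$ with the convention $B^{-1}=\bigoplus_{i\geq0}I^{i}$, one has $C^{k-1}\cong B^{k}$, and a downward induction on $j$ using the short exact sequences $0\to C^{j-1}\to B^{j}\to C^{j}\to 0$, the long exact $\Tor$ sequence, and the facts that over a left $n$-$\X$-coherent ring every $n$-$\X$-flat module is $(n+1)$-$\X$-flat and every $U\in\X_{n}$ lies in $\X_{n+1}$, shows successively that each $C^{j}$, and finally $\bigoplus_{i\geq0}I^{i}$ itself, is $n$-$\X$-flat. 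Each $I^{i}$ is a retract of $\bigoplus_{i\geq0}I^{i}$, hence $n$-$\X$-flat, so $0\to G\to I^{0}\to I^{1}\to\cdots$ is a right $n$-$\X$-flat coresolution of $G$ and Corollary~\ref{2.ty} yields that $G$ is Gorenstein $n$-$\X$-flat.

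I expect the step requiring the most care to be this final dimension-shifting argument in $(3)\Rightarrow(1)$: one must be sure that $\bigoplus_{i\geq0}I^{i}$ genuinely has \emph{finite} $n$-$\X$-flat dimension, so that the coresolution by the $B^{j}$ terminates and the downward induction has a base case $C^{k-1}\cong B^{k}$; this is exactly the point where left $n$-$\X$-coherence and the closure of $\X$ under kernels of epimorphisms are invoked, just as in the injective case of Theorem~\ref{2.po}. The surrounding homological ingredients --- $-\otimes_R-$ commuting with direct sums, the long exact $\Tor$ sequence, the dimension-shift isomorphism $\Tor_n^R(-,U)\cong\Tor_1^R(-,K_{n-1})$ for $U\in\X_n$, and the passage from $n$-$\X$-flatness to $(n+1)$-$\X$-flatness over an $n$-$\X$-coherent ring --- are routine and exactly parallel to the injective case.
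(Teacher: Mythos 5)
Your overall route is exactly the one the paper intends: the paper disposes of Theorem~\ref{3.100} by saying its proof is ``similar to'' that of Theorem~\ref{2.po}, and your systematic dualization (${\rm Hom}_R(K_{n-1},-)\leftrightarrow -\otimes_RK_{n-1}$, Corollary~\ref{2.h}$\leftrightarrow$Corollary~\ref{2.ty}, Proposition~\ref{2.7}(1)$\leftrightarrow$(2), with the unconditional direct-sum stability of $n$-$\X$-flatness replacing Proposition~\ref{3.8bb}) is that argument; your $(1)\Rightarrow(2)$ and $(2)\Rightarrow(3)$ are fine and match the injective prototype step for step.

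There is, however, one concrete misstep in your $(3)\Rightarrow(1)$. You claim that $\bigoplus_{i\geq1}{\rm Im}(f^{i})$ is Gorenstein $n$-$\X$-flat because it ``carries the $n$-$\X$-flat coresolution $0\to\bigoplus_{i\geq1}{\rm Im}(f^{i})\to\bigoplus_{i\geq1}I^{i}\to\bigoplus_{i\geq2}I^{i}\to\cdots$''; but under hypothesis (3) the modules $I^{i}$ are \emph{not} assumed $n$-$\X$-flat, only of finite $n$-$\X$-flat dimension, so this is not an $n$-$\X$-flat coresolution and neither Corollary~\ref{2.ty} nor Theorem~\ref{2.3} applies to it (that justification is only available when proving $(2)\Rightarrow(1)$). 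This is exactly the point that the paper's own proof of Theorem~\ref{2.po} passes over in silence: there Proposition~\ref{2.7}(1) is applied to $0\to\bigoplus_{i\geq0}{\rm Im}(f_{i+1})\to\bigoplus_{i\geq0}A_i\to\bigoplus_{i\geq0}{\rm Im}(f_i)\to0$ although (3) only gives Gorensteinness of the right-hand end. So you have not deviated from the paper's approach, but the explicit reason you supply is false as written; to close the point one would need, e.g., that Gorenstein $n$-$\X$-flat right modules are closed under direct summands (note $\bigoplus_{i\geq1}{\rm Im}(f^{i})$ is a summand of the hypothesized Gorenstein module $\bigoplus_{i\geq0}{\rm Im}(f^{i})$), a fact not established in the paper, or a restructuring of the induction. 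A smaller remark of the same kind: your assertion that $\bigoplus_{i\geq0}I^{i}$ has \emph{finite} $n$-$\X$-flat dimension tacitly assumes the dimensions ${\rm fd}_{\X_n}(I^{i})$ are bounded; the paper makes the identical tacit assumption for $\bigoplus_{i\geq0}A_i$ in Theorem~\ref{2.po}, so here you are merely inheriting the paper's own looseness rather than introducing a new gap.
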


\section{ $\X$-$FC$-rings }

 A ring $R$ is called left $\X$-$FC$-ring if $R$ is self left $n$-$\X$-injective and left  $n$-$\X$-coherent. In this  section, we investigate properties of Gorenstein  $n$-$\X$-injective and $n$-$\X$-flat modules  over $\X$-$FC$-rings, thus generalizing several classical results. Notice that the notion of $\X$-$FC$-ring generalizes the classical notions of quasi-Frobenius and $FC$ (i.e., IF) rings.

It is well-known that quasi-Frobenius (resp., $FC$) rings can be seen as rings over which all modules are Gorenstein injective (resp., Gorenstein $FP$-injective). Here, we extend this fact as well as other known ones to our new context. 

\begin{prop}\label{3.1} 
Let $\X$ be a class of   $R$-modules. Then, 
every   $R$-module is Gorenstein $n$-$\X$-injective if and only if 
every projective   $R$-module is $n$-$\X$-injective and for any   $R$-module $N$,
${\rm Hom}_R(-, N)$ is exact with respect to all special short exact sequences of $\X_n$ with modules of  finite projective dimension. 
\end{prop}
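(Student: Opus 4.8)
First I would translate both conditions into the vanishing of $\Ext^1$. Fix $U\in\X_n$ with a free resolution $F_n\to\cdots\to F_0\to U\to 0$ and its associated special short exact sequence $0\to K_n\to F_n\to K_{n-1}\to 0$. Applying $\Hom_R(-,N)$ and using $\Ext_R^1(F_n,N)=0$, this sequence is $\Hom_R(-,N)$-exact exactly when $\Ext_R^1(K_{n-1},N)=0$, and dimension shifting along the resolution gives $\Ext_R^1(K_{n-1},N)\cong\Ext_R^n(U,N)$. Consequently, the second condition in the statement is equivalent to the assertion that every special $\X$-presented module of finite projective dimension is projective (a module whose $\Ext_R^1$ vanishes against every module is projective). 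Once this reformulation is in hand, both implications come down to producing or recognizing a complete $n$-$\X$-injective resolution.

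For $(\Rightarrow)$, assume every $R$-module is Gorenstein $n$-$\X$-injective. For a projective module $P$, choose a complete $n$-$\X$-injective resolution $\mathbf{A}=\cdots\to A_1\to A_0\to A^0\to A^1\to\cdots$ with $P=\Ker(A^0\to A^1)$; exactness at $A^0$ yields an epimorphism $A_0\twoheadrightarrow P$, which splits since $P$ is projective, so $P$ is a direct summand of the $n$-$\X$-injective module $A_0$, hence $n$-$\X$-injective (vanishing of $\Ext_R^n(U,-)$ passes to direct summands). For the second assertion, fix an $R$-module $N$ and a special $\X$-presented $K_{n-1}$ coming from $U\in\X_n$ with $\pd_R(K_{n-1})<\infty$, and take a complete $n$-$\X$-injective resolution $\mathbf{A}$ of $N$; by hypothesis $\Hom_R(K_{n-1},\mathbf{A})$ is exact. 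Writing $C^0=\Im(A^0\to A^1)=\Ker(A^1\to A^2)$, the short exact sequence $0\to N\to A^0\to C^0\to 0$ together with $\Ext_R^1(K_{n-1},A^0)\cong\Ext_R^n(U,A^0)=0$ identifies $\Ext_R^1(K_{n-1},N)$ with $\Coker\big(\Hom_R(K_{n-1},A^0)\to\Hom_R(K_{n-1},C^0)\big)$; since $\Hom_R(K_{n-1},C^0)=\Ker\big(\Hom_R(K_{n-1},A^1)\to\Hom_R(K_{n-1},A^2)\big)$, the exactness of $\Hom_R(K_{n-1},\mathbf{A})$ at $\Hom_R(K_{n-1},A^1)$ forces that cokernel to be zero. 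By the first paragraph, this is exactly the $\Hom_R(-,N)$-exactness required.

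For $(\Leftarrow)$, assume that projective modules are $n$-$\X$-injective and, via the reformulation, that every special $\X$-presented module of finite projective dimension is projective. Given an arbitrary $R$-module $G$, splice a projective resolution $\cdots\to P_1\to P_0\to G\to 0$ with an injective coresolution $0\to G\to E^0\to E^1\to\cdots$ into one exact sequence $\mathbf{A}=\cdots\to P_1\to P_0\to E^0\to E^1\to\cdots$ with $G=\Ker(E^0\to E^1)$. Each $P_i$ is $n$-$\X$-injective by hypothesis and each $E^i$ is $n$-$\X$-injective by Remark \ref{21}(2). Finally, any special $\X$-presented $K_{n-1}$ with $\pd_R(K_{n-1})<\infty$ is projective, so $\Hom_R(K_{n-1},-)$ preserves exactness and $\Hom_R(K_{n-1},\mathbf{A})$ is exact; hence $\mathbf{A}$ witnesses that $G$ is Gorenstein $n$-$\X$-injective.

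The main obstacle will be the cokernel computation in $(\Rightarrow)$: because the $A^i$ are merely $n$-$\X$-injective, only $\Ext_R^1(K_{n-1},A^i)$ vanishes and not the higher $\Ext$-groups, so the conclusion must be squeezed out of the first cohomology of $\Hom_R(K_{n-1},\mathbf{A})$ alone. This is precisely where the dimension-shift isomorphism $\Ext_R^1(K_{n-1},-)\cong\Ext_R^n(U,-)$ and the total $\Hom_R(K_{n-1},-)$-acyclicity of $\mathbf{A}$ must be combined with care; everything else is bookkeeping with spliced resolutions and additivity of $\Ext$.
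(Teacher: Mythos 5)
Your proposal is correct and follows essentially the same route as the paper: for $(\Rightarrow)$ you realize a projective module as a direct summand of an $n$-$\X$-injective term of its (complete) resolution, and for $(\Leftarrow)$ you splice a projective resolution with an injective coresolution and check $\Hom_R(K_{n-1},-)$-exactness via the vanishing of $\Ext_R^1(K_{n-1},-)$, which is exactly the paper's argument phrased through your (valid) reformulation that such $K_{n-1}$ of finite projective dimension must be projective. Your cokernel computation in $(\Rightarrow)$ merely spells out in detail the step the paper disposes of by citing the definition, so there is no substantive difference.
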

\begin{proof}
{
$(\Longrightarrow)$ Let $M$ be a  projective
  $R$-module. Then, by hypothesis, $M$ is Gorenstein $n$-$\X$-injective. So, the following  $n$-$\X$-injective resolution of $M$ exists:
$$\cdots \longrightarrow A_1\longrightarrow A_0\longrightarrow M\longrightarrow 0.$$
Since $M$ is projective, $M$ is
$n$-$\X$-injective as a direct summand of $A_0$. Also, by hypothesis and Definition \ref{2.76},  
${\rm Hom}_R(-, N)$ is exact with respect to all special short exact sequences with modules of  finite projective dimension since every  $R$-module $N$ is Gorenstein $n$-$\X$-injective. 

$(\Longleftarrow)$ Choose an  injective resolution of $G$:  $0\rightarrow G\rightarrow E^{0}\rightarrow E^{1}\rightarrow\cdots$ and a projective resolution  of $G$: $\cdots \rightarrow F_{1}\rightarrow F_{0}\rightarrow G\rightarrow 0$, where every $F_i$ is $n$-$\X$-injective by hypothesis. 
 Assembling these resolutions, we get, by Remark \ref{21}, the following exact sequence of $n$-$\X$-injective $R$-modules:
$${\mathbf{A}}=\cdots \longrightarrow F_{1}\longrightarrow F_{0}\longrightarrow E^{0}\longrightarrow E^{1}\longrightarrow\cdots,$$
where $G={\rm ker}(E^0\rightarrow E^1)$, $K^{i}={\rm ker}(E^i\rightarrow E^{i+1})$ and $K_{i}={\rm ker}(F_{i}\rightarrow F_{i-1})$ for any $i\geq 1$.  Let $K_{n-1}$ be a special $\X$-presented module with ${\rm pd}_{R}(K_{n-1}) <\infty$. Then, by hypothesis, we have: $${\rm Ext}_R^{1}(K_{n-1}, G)={\rm Ext}_R^{1}(K_{n-1}, K_i)={\rm Ext}_R^{1}(K_{n-1}, K^{i})=0.$$
So, ${\rm Hom}_R(K_{n-1},{\mathbf{A}})$ is exact, and hence $G$ is Gorenstein $n$-$\X$-injective.}
\end{proof}

\begin{prop}\label{3.vc}
Let $\X$ be a class of  $R$-modules. Then, 
every right $R$-module is Gorenstein $n$-$\X$-flat if and only if 
every injective right $R$-module is $n$-$\X$-flat and for any $R$-module $N$,
$N\otimes_{R}-$ is exact with respect to all special short exact sequences of $\X_n$   with modules of  finite projective dimension. 
\end{prop}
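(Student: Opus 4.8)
This is the flat counterpart of Proposition~\ref{3.1}, and my plan is to run that proof essentially verbatim, interchanging $\Hom_R(-,N)$ with $N\otimes_R-$, ``injective'' with ``flat'', and the vanishing of $\Ext^1_R(K_{n-1},-)$ on $n$-$\X$-injective modules with the vanishing of $\Tor_1^R(-,K_{n-1})$ on $n$-$\X$-flat right modules; recall from Section~3 that the latter holds because $\Tor_1^R(M,K_{n-1})\cong\Tor_n^R(M,U)$ for an $n$-$\X$-flat $M$ and $U\in\X_n$. The single asymmetry with Proposition~\ref{3.1} is already built into the statement: the left half of a complex witnessing Gorenstein $n$-$\X$-flatness may always be chosen to be a flat resolution, and flat modules are $n$-$\X$-flat by Remark~\ref{21}, so the ``trivial'' hypothesis one must impose is on the right half only, namely that injective right modules be $n$-$\X$-flat.

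For $(\Longrightarrow)$ I would proceed as follows. Let $E$ be an injective right $R$-module; being Gorenstein $n$-$\X$-flat, it equals $\Ker(F^0\to F^1)$ for some exact complex $\mathbf{F}=\cdots\to F_1\to F_0\to F^0\to F^1\to\cdots$ of $n$-$\X$-flat right modules (Definition~\ref{2.76}). The induced monomorphism $E\hookrightarrow F^0$ splits since $E$ is injective, so $E$ is a direct summand of $F^0$, whence $\Tor_n^R(E,U)$ is a direct summand of $\Tor_n^R(F^0,U)=0$ for every $U\in\X_n$; thus $E$ is $n$-$\X$-flat. For the second condition, fix a right $R$-module $N$ (so that $N\otimes_R-$ pairs with the left modules in $\X$); since $N$ is Gorenstein $n$-$\X$-flat it occurs as $\Ker(F^0\to F^1)$ in an exact complex $\mathbf{F}$ of $n$-$\X$-flat right modules with $\mathbf{F}\otimes_RK_{n-1}$ exact for every special $\X$-presented $K_{n-1}$ of finite projective dimension. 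I would then slide the kernel--cokernel short exact sequences of $\mathbf{F}$ through $-\otimes_RK_{n-1}$, use that each $n$-$\X$-flat term annihilates $\Tor_1^R(-,K_{n-1})$, and combine this with the exactness of $\mathbf{F}\otimes_RK_{n-1}$ to extract $\Tor_1^R(N,K_{n-1})=0$. Since $F_n$ is free, this last vanishing is precisely the exactness of $N\otimes_R-$ on the special short exact sequence $0\to K_n\to F_n\to K_{n-1}\to 0$, which is what was required.

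For $(\Longleftarrow)$ I would mimic the proof of Proposition~\ref{3.1}. Given a right $R$-module $G$, choose a projective resolution $\cdots\to P_1\to P_0\to G\to 0$---its terms are flat, hence $n$-$\X$-flat by Remark~\ref{21}---and an injective coresolution $0\to G\to E^0\to E^1\to\cdots$---its terms are $n$-$\X$-flat by the first hypothesis---and splice them into an exact complex $\mathbf{F}=\cdots\to P_1\to P_0\to E^0\to E^1\to\cdots$ of $n$-$\X$-flat right modules with $G=\Ker(E^0\to E^1)$. Setting $K_i=\Im(P_i\to P_{i-1})$ and $K^i=\Ker(E^i\to E^{i+1})$ for $i\ge 1$ and $K_0=K^0=G$, the second hypothesis yields $\Tor_1^R(K_i,K_{n-1})=\Tor_1^R(K^i,K_{n-1})=0$ for every special $\X$-presented $K_{n-1}$ of finite projective dimension; tensoring the short exact sequences $0\to K_{i+1}\to P_i\to K_i\to 0$ and $0\to K^i\to E^i\to K^{i+1}\to 0$ with $K_{n-1}$ then kills the homology of $\mathbf{F}\otimes_RK_{n-1}$ at every node. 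Hence $G$ is Gorenstein $n$-$\X$-flat by Definition~\ref{2.76}.

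I expect the only genuine obstacle to be the extraction of $\Tor_1^R(N,K_{n-1})=0$ in the second half of $(\Longrightarrow)$: because $\otimes_R$ is merely right exact, this is not read off as painlessly as its $\Hom$-dual in Proposition~\ref{3.1}, and the diagram chase must be arranged at the node of $\mathbf{F}$ that isolates $N$ itself---rather than a neighbouring cycle $\Ker(F^j\to F^{j+1})$---as the module with vanishing first $\Tor$. Everything else should be a routine transcription of Proposition~\ref{3.1} and its proof.
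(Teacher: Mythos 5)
Your proposal is correct and follows the paper, whose entire proof of this proposition is the remark that it is ``similar to the proof of Proposition \ref{3.1}'': your dualization (splitting the injective module off $F^0$ as a direct summand, the dimension shift $\Tor_1^R(M,K_{n-1})\cong\Tor_n^R(M,U)$ for $n$-$\X$-flat $M$, the cycle-by-cycle extraction of $\Tor_1^R(N,K_{n-1})=0$ from the exactness of $\mathbf{F}\otimes_R K_{n-1}$, and the splicing of a projective resolution with an $n$-$\X$-flat injective coresolution) is precisely that transcription, carried out with the extra care the tensor side needs. The only wrinkle --- that the flat part of Definition \ref{2.76} requires exactness of $\mathbf{F}\otimes_R K_{n-1}$ for special $\X$-presented $K_{n-1}$ of finite \emph{flat} dimension, whereas the proposition (and hence your converse) speaks of finite projective dimension, a gap bridged by Lemma \ref{2.09} only under $n$-$\X$-coherence --- is already present in the paper's own statement and is not introduced by your argument.
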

\begin{proof}
{Similar to the proof  of  Proposition \ref{3.1}.} 
\end{proof}

\begin{thm}\label{3.2} 
Let $R$ be a left $n$-$\X$-coherent ring and $\X$ be a class of   $R$-modules. Then,  the following statements are equivalent:
\begin{enumerate}
\item [\rm (1)]
Every  $R$-module  is Gorenstein $n$-$\X$-injective;
\item [\rm (2)]
Every projective   $R$-module is $n$-$\X$-injective;
\item [\rm (3)]
$R$ is self left $n$-$\X$-injective.
\end{enumerate}
\end{thm}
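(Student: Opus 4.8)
The plan is to establish the cycle $(1)\Rightarrow(2)\Rightarrow(3)\Rightarrow(1)$, using the results already obtained in Sections~3 and~4 as black boxes. For $(1)\Rightarrow(2)$ I would invoke Proposition~\ref{3.1}: its statement asserts that ``every $R$-module is Gorenstein $n$-$\X$-injective'' is equivalent to the conjunction of ``every projective $R$-module is $n$-$\X$-injective'' with a $\Hom$-exactness condition, so $(1)$ forces $(2)$. (One can also argue directly: if $P$ is projective and Gorenstein $n$-$\X$-injective, the left half of the defining complex $\mathbf{A}$ provides an epimorphism from an $n$-$\X$-injective module onto $P$, which splits because $P$ is projective; thus $P$ is a direct summand of an $n$-$\X$-injective module, hence itself $n$-$\X$-injective by Proposition~\ref{3.8bb}, which applies since $R$ is left $n$-$\X$-coherent.) For $(2)\Rightarrow(3)$ there is nothing to prove: $R$ is a projective left $R$-module, so by $(2)$ it is $n$-$\X$-injective, i.e. $R$ is self left $n$-$\X$-injective.

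The main implication is $(3)\Rightarrow(1)$, and the point is that one can avoid checking any $\Hom$-exactness condition by hand. First, since $R$ is left $n$-$\X$-coherent and self left $n$-$\X$-injective, Theorem~\ref{3.8bv} (the equivalence of its statements $(1)$ and $(7)$) shows that every flat $R$-module, and in particular every projective $R$-module, is $n$-$\X$-injective. Next, given an arbitrary $R$-module $G$, pick a projective resolution $\cdots\rightarrow P_{1}\rightarrow P_{0}\rightarrow G\rightarrow 0$; by the previous step each $P_{i}$ is $n$-$\X$-injective, so this is exactly an exact sequence of $n$-$\X$-injective modules of the form appearing in condition $(2)$ of Corollary~\ref{2.h}. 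That corollary (implication $(2)\Rightarrow(1)$) then yields that $G$ is Gorenstein $n$-$\X$-injective, closing the cycle.

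I do not expect a genuine obstacle, since the hard work is already encapsulated in the earlier results; the only delicate step is the passage from a one-sided resolution of $G$ by $n$-$\X$-injectives to the full two-sided acyclic complex $\mathbf{A}$ with $\Hom_{R}(K_{n-1},\mathbf{A})$ exact for every special $\X$-presented $K_{n-1}$ of finite projective dimension, but this is precisely what Corollary~\ref{2.h} (built on Theorem~\ref{2.3} and Lemma~\ref{2.09}) provides. It is worth noting that the left $n$-$\X$-coherence hypothesis is used essentially twice: once to have Theorem~\ref{3.8bv} available, and once so that Corollary~\ref{2.h} holds.
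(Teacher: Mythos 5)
Your proposal is correct and takes essentially the same route as the paper: $(1)\Rightarrow(2)$ via Proposition~\ref{3.1}, $(2)\Rightarrow(3)$ because $R$ itself is projective, and $(3)\Rightarrow(1)$ by resolving an arbitrary module $G$ by projectives that are $n$-$\X$-injective and then applying Corollary~\ref{2.h}. The only cosmetic difference is in how the terms of that resolution are seen to be $n$-$\X$-injective: the paper takes a free resolution and uses Proposition~\ref{3.8bb} (direct sums of copies of the self $n$-$\X$-injective ring $R$), whereas you invoke Theorem~\ref{3.8bv} ($(1)\Rightarrow(7)$, flat implies $n$-$\X$-injective); both hinge on the left $n$-$\X$-coherence hypothesis.
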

\begin{proof}
{$(1)\Longrightarrow (2)$ and $(2)\Longrightarrow (3)$ hold by Proposition \ref{3.1}. 

$(3)\Longrightarrow (1)$
Let $G$ be an $R$-module and $\cdots\rightarrow F_1\rightarrow F_0\rightarrow G\rightarrow 0$ be any
free resolution of $G$. Then, by  Proposition \ref{3.8bb},  each $F_i$ is $n$-$\X$-injective. Hence, Corollary $\ref{2.h}$ completes the proof.}
\end{proof}

\begin{exs}\label{3.54} 
{Let  $R=k[x^3,x^2,x^{2}y,xy^2,xy,y^2,y^3]$ be  a ring and $\X$ a class of all $1$-presented  $R$-modules.  We claim that $R$ is not $1$-$\X$-injective. Suppose to the contrary, $R$ is $1$-$\X$-injective.  We have $\frac{R}{Rx^2}$ is special $\X$-presented since $Rx^2\cong R$ is special $\X$-generated. Also, ${\rm pd}_{R}(\frac{R}{Rx^2})<\infty$. So by Proposition \ref{3.1} and Theorem \ref{3.2}, $\frac{R}{Rx^2}$ is projective. Therefore, the exact sequence $0\rightarrow Rx^2\rightarrow R\rightarrow \frac{R}{Rx^2}\rightarrow0$ splits. Thus, $Rx^2$ is a direct summand of $R$ and so, $x^2$ is an idempotent, a contradiction.}
\end{exs}

 Let $\X$ be a class of graded    $R$-modules. Then, a graded ring $R$ will be  called $n$-gr-regular if and only if it is $n$-$\X$-regular if and only if every $n$-presented    $R$-module in $\X$ is projective if and only if every   $R$-module in $\X$ is  $n$-$\X$-injective if and only if every right $R$-module in $\X$ is  $n$-$\X$-flat. This  is a generalization of \cite[Proposition 3.11]{ZL}. Notice that,  when  $n=1$, then $R$ is gr-regular if and only if $1$-$\X$-regular, see \cite{Stenst2}.

The following example show that, for some of class $\X$ of $R$-modules and any $m> n$,  every Gorenstein $n$-$\X$-injective (resp., flat) module is Gorenstein $m$-$\X$-injective. But, Gorenstein $m$-$\X$-injectivity (resp., flatness) does not imply, in general, Gorenstein $n$-$\X$-injectivity (resp., flatness).

 \begin{exs}\label{2po}
 (1) Let $R$ be a graded ring and  $\X$ a class of graded  $R$-module. Then, for any $m> n$, every Gorenstein $n$-$\X$-injective (resp., flat) module is Gorenstein $m$-$\X$-injective (resp., flat), since by \cite[Remark 3.5]{NG}, every $n$-$\X$-injective (resp., flat) module is $m$-$\X$-injective (resp., flat).
  
 (2) Let $R=k[X]$, where $k$ is a field, and $\X$ a class of graded  $R$-modules. Then,   by Remark \ref{2}, every graded left (resp., right) $R$-module is Gorenstein $2$-$\X$-injective (resp., flat)  since every $2$-presented graded  $R$-module is projective. We claim that there is a graded left (resp., right) $R$-module $N$  so that $N$ is not Gorenstein $1$-$\X$-injective (resp., flat). Suppose to the contrary, every graded  left (resp., right) $R$-module  is Gorenstein $1$-$\X$-injective (resp. flat).  If $U$ is a  finitely presented graded   module, then the special  exact sequence 
$0\rightarrow L\rightarrow F_0\rightarrow U\rightarrow 0$ of graded   modules exists. So
 by Proposition \ref{3.1} (resp., Proposition \ref{3.vc}), $U$ is projective and it follows that $R$ is $1$- $\X$-regular or $\X$-regular, contradiction, see \cite[Example 3.6]{NG}.
 \end{exs}
 
\begin{prop}\label{2.1}
Let $\X$ be a class of   $R$-modules. Then, the following statements hold:
\begin{enumerate}
\item [\rm (1)]
If $G$ is a Gorenstein injective   $R$-module, then ${\rm Hom}_{R}(-, G)$ is exact with respect to all special short exact sequences  with modules of  finite projective dimension.
\item [\rm (2)]
If $G$ is a Gorenstein flat right $R$-module, then $G\otimes_{R}-$ is exact with respect to all special short exact sequences with modules of  finite flat dimension.
\end{enumerate}
\end{prop}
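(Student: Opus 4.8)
The plan is to reduce each of the two statements to a single (co)homological vanishing fact about ordinary Gorenstein injective / Gorenstein flat modules, and then run it through the long exact sequence attached to a special short exact sequence. Recall that a special short exact sequence of $\X_n$ has the form $0\to K_n\to F_n\to K_{n-1}\to 0$ with $F_n$ finitely generated free. For part (1), applying $\Hom_R(-,G)$ to such a sequence is automatically left exact, so the only thing to verify is that $\Hom_R(F_n,G)\to\Hom_R(K_n,G)$ is surjective, i.e.\ that $\Ext_R^1(K_{n-1},G)=0$. Since by hypothesis the modules occurring in the sequence have finite projective dimension, $\pd_R(K_{n-1})<\infty$, so it suffices to prove the following claim: \emph{if $G$ is Gorenstein injective and $L$ is any $R$-module with $\pd_R(L)<\infty$, then $\Ext_R^i(L,G)=0$ for all $i\geq 1$.}

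To prove the claim I would use iterated dimension shifting along the exact complex $\mathbf{I}=\cdots\to I_1\to I_0\to I^0\to I^1\to\cdots$ of injectives that realizes $G$ as a Gorenstein injective module. Exactness gives a surjection $I_0\twoheadrightarrow G$ and hence a short exact sequence $0\to C_1\to I_0\to G\to 0$, where $C_1=\ker(I_0\to I^0)$ is a cycle of $\mathbf{I}$ and is therefore again Gorenstein injective (a standard property of Gorenstein modules, see \cite{EJ,HM}). Feeding this into the long exact sequence of $\Ext_R^\bullet(L,-)$ and using $\Ext_R^{\geq 1}(L,I_0)=0$ yields $\Ext_R^i(L,G)\cong\Ext_R^{i+1}(L,C_1)$ for every $i\geq 1$; iterating $k$ times with the sequences $0\to C_{j+1}\to I_j\to C_j\to 0$ gives $\Ext_R^i(L,G)\cong\Ext_R^{i+k}(L,C_k)$ with $C_k$ Gorenstein injective, and taking $k$ large enough that $i+k>\pd_R(L)$ forces this to vanish. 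Plugging $L=K_{n-1}$ gives $\Ext_R^1(K_{n-1},G)=0$, proving (1). Part (2) is entirely parallel: applying $G\otimes_R-$ to $0\to K_n\to F_n\to K_{n-1}\to 0$ is right exact, so one needs $\Tor_1^R(G,K_{n-1})=0$, and since $\fd_R(K_{n-1})<\infty$ it suffices to show that for $G$ Gorenstein flat and $L$ a left module with $\fd_R(L)<\infty$ one has $\Tor_i^R(G,L)=0$ for $i\geq1$. Here I would use the inclusion $G\hookrightarrow F^0$ coming from the defining complex $\cdots\to F_1\to F_0\to F^0\to F^1\to\cdots$ of flats, get $0\to G\to F^0\to C^1\to 0$ with $C^1$ Gorenstein flat, and dimension-shift: $\Tor_i^R(G,L)\cong\Tor_{i+1}^R(C^1,L)\cong\cdots\cong\Tor_{i+k}^R(C^k,L)$, which vanishes once $i+k>\fd_R(L)$.

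The genuinely routine ingredients are the left/right exactness of $\Hom$ and $\otimes$, the vanishing of higher $\Ext$ into an injective and higher $\Tor$ with a flat, and (implicitly) the identifications $\Ext_R^1(K_{n-1},G)\cong\Ext_R^n(U,G)$ and $\Tor_1^R(G,K_{n-1})\cong\Tor_n^R(G,U)$ recorded in Section 3. The only real content is the iterated dimension-shifting argument, whose one external input is the fact that the cycles $C_k$ (resp.\ $C^k$) of a totally acyclic complex of injectives (resp.\ flats) are themselves Gorenstein injective (resp.\ Gorenstein flat); granting this, the main obstacle is merely the bookkeeping needed to guarantee that the shifted index eventually exceeds the finite projective (resp.\ flat) dimension, which terminates the induction.
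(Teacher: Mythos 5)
Your proposal is correct and follows essentially the same route as the paper: both arguments reduce the claim to the vanishing $\Ext_R^1(K_{n-1},G)=0$ (resp.\ $\Tor_1^R(G,K_{n-1})=0$) and obtain it by dimension shifting along the defining exact complex of injectives (resp.\ flats) until the $\Ext$/$\Tor$ index exceeds the finite projective (resp.\ flat) dimension. The only cosmetic difference is that you shift one step at a time against $K_{n-1}$ itself, whereas the paper truncates the left half of the complex at length $m=\pd_R(U)$ and shifts against $U$ via $\Ext_R^1(K_{n-1},G)\cong\Ext_R^n(U,G)$; the computation is the same, and as you note the Gorenstein property of the intermediate cycles is not even needed for the final vanishing.
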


\begin{proof}
{(1) Let $0\rightarrow K_n\rightarrow F_{n-1}\rightarrow K_{n-1} \rightarrow 0$ be a special short exact sequence of $U\in\X_n$. It is clear that ${\rm pd}_{R}(U)=m<\infty$ since ${\rm pd}_{R}(K_{n-1})<\infty$.  Also, let  $G$ be  Gorenstein injective.
Then, the following injective resolution of $G$ exists:
$$0\longrightarrow N\longrightarrow A_{m-1}\longrightarrow \cdots \longrightarrow A_{0}\longrightarrow G\longrightarrow 0.$$
So, ${\rm Ext}_{R}^{n+i}(U, A_j)= 0$ for every $0 \leq j \leq m-1$ and any $i\geq 0$. Thus, we deduce that
 ${\rm Ext}_{R}^{n+i}(U,G)\cong{\rm Ext}_{R}^{m+n+i}(U, N)=0$ for any $i\geq 0$. So, ${\rm Ext}_{R}^{1}(K_{n-1},G)\cong{\rm Ext}_{R}^{n}(U,G)=0. $
 
(2) The proof is similar to the one above.}
\end{proof}

Now we can state the main result of this section. 

\begin{thm}\label{3.83} 
Let $R$ be a left $n$-$\X$-coherent ring and $\X$ be a class of   $R$-modules. Then, the following statements are equivalent:
\begin{enumerate}
\item [\rm (1)]
$R$ is self left $n$-$\X$-injective;
\item [\rm (2)]
Every Gorenstein $n$-$\X$-flat   $R$-module is Gorenstein $n$-$\X$-injective;
\item [\rm (3)]
Every Gorenstein flat   $R$-module is Gorenstein $n$-$\X$-injective;
\item [\rm (4)]
Every flat   $R$-module is Gorenstein $n$-$\X$-injective;
\item [\rm (5)]
Every Gorenstein projective   $R$-module is Gorenstein $n$-$\X$-injective;
\item [\rm (6)]
Every projective   $R$-module is Gorenstein $n$-$\X$-injective;
\item [\rm (7)]
Every Gorenstein injective right $R$-module is Gorenstein $n$-$\X$-flat;
\item [\rm (8)]
Every injective right $R$-module is Gorenstein $n$-$\X$-flat;
\item [\rm (9)]
Every Gorenstein $1$-$\X$-injective right $R$-module is Gorenstein $n$-$\X$-flat;
\item [\rm (10)]
Every Gorenstein $n$-$\X$-injective right $R$-module is Gorenstein $n$-$\X$-flat.
\end{enumerate}
\end{thm}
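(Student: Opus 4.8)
The plan is to prove all ten statements equivalent by running two cycles of implications that both pass through $(1)$: a left-hand cycle covering $(1)$--$(6)$ and a right-hand cycle covering $(1)$ together with $(7)$--$(10)$. Three earlier results carry almost all of the weight. Theorem \ref{3.2} says that $(1)$ is equivalent both to ``every projective $R$-module is $n$-$\X$-injective'' and to ``every $R$-module is Gorenstein $n$-$\X$-injective''. Theorem \ref{3.8bv} says that $(1)$ is equivalent to ``every injective (resp. $1$-$\X$-injective, resp. $n$-$\X$-injective) right $R$-module is $n$-$\X$-flat''. And Theorem \ref{2.3} --- which is where the left $n$-$\X$-coherence hypothesis is genuinely used --- reduces Gorenstein $n$-$\X$-injectivity (resp. Gorenstein $n$-$\X$-flatness) to the mere existence of a two-sided exact complex of $n$-$\X$-injective (resp. $n$-$\X$-flat) modules having the given module as one of its kernels. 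With these in hand, every remaining implication is either a class inclusion or a short splitting argument.

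First I would treat $(1)\Rightarrow(2)$--$(10)$. By Theorem \ref{3.2}, $(1)$ forces every $R$-module to be Gorenstein $n$-$\X$-injective, so $(2)$, $(3)$, $(4)$, $(5)$, $(6)$ follow at once. For $(7)$--$(10)$, let $N$ be a right $R$-module of the relevant ``Gorenstein (co)injective'' type; it lies in a complete exact complex whose terms are injective, $1$-$\X$-injective or $n$-$\X$-injective right $R$-modules, and by the corresponding clause of Theorem \ref{3.8bv} (using $(1)$) all of those terms are $n$-$\X$-flat. Thus $N=\ker(F^0\to F^1)$ for an exact complex $\mathbf{F}$ of $n$-$\X$-flat right $R$-modules, and Theorem \ref{2.3}(1) declares $N$ Gorenstein $n$-$\X$-flat.

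Next come the ``downward'' implications, which are pure class inclusions (using, on the left, the obvious left-module versions of the flatness notions and of Theorem \ref{2.3}). On the left, projective $\subseteq$ flat $\subseteq$ Gorenstein flat and projective $\subseteq$ Gorenstein projective; moreover a Gorenstein flat or Gorenstein projective left module admits a complete exact complex of flat, resp. projective, modules, all of which are $n$-$\X$-flat by Remark \ref{21}(1), so it is Gorenstein $n$-$\X$-flat by Theorem \ref{2.3}(1). This gives $(2)\Rightarrow(3)\Rightarrow(4)\Rightarrow(6)$ and $(5)\Rightarrow(6)$. On the right, injective $\subseteq$ Gorenstein injective, and by Remark \ref{21}(2) every injective right $R$-module is simultaneously $1$-$\X$-injective and $n$-$\X$-injective, hence Gorenstein $1$-$\X$-injective and Gorenstein $n$-$\X$-injective by Remark \ref{2}(2); this gives $(7)\Rightarrow(8)$, $(9)\Rightarrow(8)$ and $(10)\Rightarrow(8)$.

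Finally, the closing steps $(6)\Rightarrow(1)$ and $(8)\Rightarrow(1)$ are where the argument has real content, by a splitting trick. For $(6)\Rightarrow(1)$: given a projective $R$-module $P$, statement $(6)$ makes it Gorenstein $n$-$\X$-injective, so by Corollary \ref{2.h} there is an exact sequence $\cdots\to A_1\to A_0\to P\to 0$ with each $A_i$ being $n$-$\X$-injective; projectivity of $P$ splits $A_0\to P$, so $P$ is a direct summand of the $n$-$\X$-injective module $A_0$ and is therefore itself $n$-$\X$-injective, whence Theorem \ref{3.2} yields $(1)$. Dually, for $(8)\Rightarrow(1)$: given an injective right $R$-module $E$, statement $(8)$ makes it Gorenstein $n$-$\X$-flat, so by Corollary \ref{2.ty} there is an exact sequence $0\to E\to B^0\to B^1\to\cdots$ with each $B^i$ being $n$-$\X$-flat; injectivity of $E$ splits $E\to B^0$, so $E$ is a direct summand of the $n$-$\X$-flat module $B^0$ and hence $n$-$\X$-flat, whence Theorem \ref{3.8bv} yields $(1)$. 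I expect the only real obstacle to be organizational: matching each of $(7)$--$(10)$ to the right clause of Theorem \ref{3.8bv}, and checking that every ``Gorenstein flat $\Rightarrow$ Gorenstein $n$-$\X$-flat''-type passage uses nothing beyond the left $n$-$\X$-coherence of $R$ so that Theorem \ref{2.3} is genuinely applicable; beyond that the homological content is light.
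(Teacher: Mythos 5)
Your proposal is essentially the paper's own strategy, resting on the same three pillars: Theorem \ref{3.2} to get $(1)\Rightarrow(2)$--$(6)$ and to convert ``every projective module is $n$-$\X$-injective'' back into $(1)$; Theorem \ref{3.8bv} combined with the characterization in Theorem \ref{2.3}(1) to pass from $(1)$ to the flatness statements $(7)$--$(10)$; and the two splitting arguments, $(6)\Rightarrow(1)$ via Corollary \ref{2.h} and $(8)\Rightarrow(1)$ via Corollary \ref{2.ty}, which are exactly the paper's closing steps. Where you genuinely diverge is in the routing on the right-hand side, and your routing is arguably cleaner: you deduce $(1)\Rightarrow(7)$ and $(1)\Rightarrow(8)$ directly by feeding the defining exact complex of injectives into Theorem \ref{3.8bv}(4) and Theorem \ref{2.3}(1), whereas the paper goes through the chain $(1)\Rightarrow(9)\Rightarrow(7)\Rightarrow(8)$ and its step $(9)\Rightarrow(7)$ needs the extra input of Proposition \ref{2.1} (Gorenstein injective modules are ${\rm Hom}$-exact on special sequences of finite projective dimension). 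Likewise you return $(9)$ and $(10)$ to $(1)$ through $(8)$ using Remark \ref{21}(2) and Remark \ref{2}(2), while the paper returns $(10)$ through $(7)$; both work, yours with slightly lighter machinery. Also note that invoking Theorem \ref{2.3}(1) as a black box (rather than redoing its induction, as the paper does in $(1)\Rightarrow(9)$ and $(1)\Rightarrow(10)$) is legitimate, since only left $n$-$\X$-coherence is used there and the modules in question are right modules.

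The one caveat concerns statement $(2)$. Your step $(2)\Rightarrow(3)$ needs ``every Gorenstein flat left module is Gorenstein $n$-$\X$-flat,'' and you justify it by ``the obvious left-module versions'' of Remark \ref{21}(1) and Theorem \ref{2.3}(1). But in the paper's framework $\X$ is a class of left modules, so $n$-$\X$-flatness (and hence Gorenstein $n$-$\X$-flatness) is defined only for right modules, and the proof of Theorem \ref{2.3}(1) uses left $n$-$\X$-coherence in a side-specific way; a mirrored version would require data and hypotheses on the other side that are not assumed. So this step is only as meaningful as the formulation of $(2)$ itself. To be fair, the published proof is worse off here: it establishes only $(1)\Rightarrow(2)$ and never derives anything from $(2)$, so the loop through $(2)$ is not closed at all in the paper. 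Your attempt at least identifies what is needed, but you should either make the left-sided conventions (and the mirrored coherence hypothesis behind the left analogue of Theorem \ref{2.3}) explicit, or read $(2)$ as a statement about right modules and adjust accordingly; as written, that single implication is not fully supported by the results available in the paper.
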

\begin{proof}
 $(1)\Longrightarrow (2)$, $(1)\Longrightarrow (3)$, $(1)\Longrightarrow (4)$, $(1)\Longrightarrow (5)$ and $(1)\Longrightarrow (6)$ follow immediately from Theorem $\ref{3.2}$. 

$(3)\Longrightarrow (4)$, $(4)\Longrightarrow (6)$ and $(5)\Longrightarrow (6)$ are trivial.

$(3)\Longrightarrow (1)$ Assume that $G$ is a projective   $R$-module. Then, 
$G$ is 
flat and so $G$ is Gorenstein $n$-$\X$-injective by (3). So, similar to the proof of ($\Longrightarrow $) of  Proposition $\ref{3.1}$, $G$ is  $n$-$\X$-injective. Thus, the assertion follows from Theorem $\ref{3.2}$.

$(6)\Longrightarrow (1)$ This is similar to the proof of $(3)\Longrightarrow (1)$.

$(1)\Longrightarrow (9)$  By Theorem \ref{3.8bv}, every $1$-$\X$-injective right $R$-module is $n$-$\X$-flat. Suppose that $G$ is Gorenstein $1$-$\X$-injective. So, an  exact sequence 
 $${\mathbf{M}}=\cdots \longrightarrow M_{1}\longrightarrow M_{0}\longrightarrow M^{0}\longrightarrow M^{1}\longrightarrow\cdots,$$
 of $n$-$\X$-flat right $R$-modules exists, where $G={\rm ker}(M^0\rightarrow M^1)$. Let $K_{n-1}$ be  special $\X$-presented  with ${\rm f.d(K_{n-1})}<\infty$. Then, similar to the  proof of Theorem \ref{2.3}(1),  ${\mathbf{M}}\otimes_{R}K_{n-1}$ is exact, and hence $G$ is  Gorenstein $n$-$\X$-flat.

$(9)\Longrightarrow (7)$ By Remark \ref{21}, every injective right $R$-module is $1$-$\X$-injective. So, if $G$ is Gorenstein injective, then an exact sequence  
 $${\mathbf{E}}=\cdots \longrightarrow E_{1}\longrightarrow E_{0}\longrightarrow E^{0}\longrightarrow E^{1}\longrightarrow\cdots$$
 of $1$-$\X$-injective right $R$-modules exists, where $G={\rm ker}(E^0\rightarrow E^1)$. So, if $U\in\X_1$ with ${\rm pd}(U)<\infty$, then $U$ is special $\X$-presented and by Proposition \ref{2.1}, ${\rm Hom}_{R}(U, {\mathbf{E}})$ is exact. Therefore, $G$ is Gorenstein $1$-$\X$-injective.  

$(7)\Longrightarrow (8)$
is trivial since every injective $R$-module is Gorenstein injective.

$(8)\Longrightarrow (1)$
Let $M$ be an injective right $R$-module. Since $M$ is Gorenstein $n$-$\X$-flat, we have an exact sequence:
 $${\mathbf{M}}=\cdots \longrightarrow M_{1}\longrightarrow M_{0}\longrightarrow M^{0}\longrightarrow M^{1}\longrightarrow\cdots,$$ 
where any $M_i$ is $n$-$\X$-flat and $M={\rm ker}(M^0\rightarrow M^1)$. Then, the split exact sequence $ 0\rightarrow M\rightarrow M^{0}\rightarrow L\rightarrow 0$ implies that $M$ is $n$-$\X$-flat, and hence by Theorem \ref{3.8bv}, we deduce that $R$ is self left  $n$-$\X$-injective.

$(1)\Longrightarrow (10)$
 Suppose that $G$ is a Gorenstein $n$-$\X$-injective right $R$-module. By Theorem \ref{3.8bv}(6), every $n$-$\X$-injective right $R$-module is $n$-$\X$-flat. Thus, an  exact sequence
 $${\mathbf{N}}=\cdots \longrightarrow N_{1}\longrightarrow N_{0}\longrightarrow N^{0}\longrightarrow N^{1}\longrightarrow\cdots$$
 of $n$-$\X$-flat right $R$-modules exists, where $G={\rm ker}(N^0\rightarrow N^1)$. Then, similar to the proof of Theorem \ref{2.3}(1),  (10) follows.
 
$(10)\Longrightarrow (7)$ is clear.
\end{proof}

\noindent\textbf{Acknowledgment.}  
The authors would like to thank   the referee for the   very helpful comments and suggestions.  Arij Benkhadra's research reported in this publication was supported by a scholarship from the Graduate Research Assistantships in Developing Countries Program of the Commission for Developing Countries of the International Mathematical Union.


\begin{thebibliography}{999}\addcontentsline{toc}{chapter}{\textbf{Bibliography}}

\bibitem{D.Benn}  D. Bennis, {$n$-$\X$-Coherent rings}, \textit{Int. Electron. J. Algebra}  \textbf{7} (2010), 128--139.

\bibitem{Bou} N. Bourbaki, \textit{Alg\`{e}bre Homologique}, Chapitre 10, Masson, Paris (1980).


\bibitem{Chase} S. U. Chase, Direct product of modules, \textit{Trans. Amer. Math. Soc.}  \textbf{97} (1960), 457--473.

\bibitem{CD}  J. L. Chen and N. Ding, {On $n$-coherent rings}, \textit{Comm. Algebra}  \textbf{24} (1996), 3211--3216.


\bibitem{Costa}  D. L. Costa, {Parameterizing families of non-Noetherian rings}, \textit{Comm. Algebra} \textbf{22} (1994), 3997--4011.

\bibitem{Cri-Tor-Monic}  S. Crivei and B. Torrecillas, {On some monic covers and epic envelopes}, \textit{Arab. J. Sci. Eng}.  \textbf{33}(2) (2008), 123--135.
 
\bibitem{DKM} D. E. Dobbs, S.  Kabbaj and   N. Mahdou, $n$-Coherent rings and modules,  \textit{Lect. Notes Pure Appl. Math.}  \textbf{185}  (1997),   269--281.

\bibitem{EJ}   E. E. Enochs and O. M. G. Jenda,  Gorenstein injective and projective modules,  \textit{Math. Z.}  \textbf{220} (1995), 611--633.

\bibitem{EO}   E. E. Enochs, O. M. G. Jenda and B. Torrecillas, Gorenstein flat modules,  \textit{J. Nanjing Univ., Math. Biq.} \textbf{10} (1993), 1--9.

\bibitem{Z.P} Z. Gao and J. Peng, {$n$-Strongly Gorenstein graded modules}, \textit{Czech. Math. J.} \textbf{69}  (2019), 55--73.

\bibitem{Z.G} Z. Gao and F. Wang, {Coherent rings and Gorenstein FP-injective
modules}, \textit{Comm. Algebra}  \textbf{40} (2012), 1669--1679.

 

\bibitem{HM}   H. Holm,  Gorenstein homological dimensions,  \textit{J. Pure Appl. Algebra} \textbf{189} (2004), 167--193.

\bibitem{LD}   L. Mao and N. Ding,  Gorenstein FP-injective and Gorenstein flat modules,  \textit{J. Algebra Appl.} \textbf{7}(4) (2008), 491--506.

\bibitem{LM}   L. Mao,  Strongly Gorenstein graded modules,  \textit{Front. Math. China}, \textbf{12}(1) (2017), 157--176.


\bibitem{CNA} C. N\v{a}st\v{a}sescu, {Some constructions over graded rings},  \textit{ J. Algebra Appl.}  \textbf{120} (1989), 119--138.


\bibitem{Rot2}  J. Rotman,  \textit{An Introduction to Homological Algebra},  Second edition,   Universitext,
Springer, New York,  (2009).

\bibitem{Su} B. Stenstr\"{o}m,  {Coherent rings and FP-injective modules}, \textit{J. London Math. Soc.} \textbf{2} (1970), 323--329.

\bibitem{Stenst2} B. Stenstr\"{o}m,  \textit{Rings of Quotients}, Springer-Verlag, Berlin, Heidelberg, New York (1975).

\bibitem{ZL} X. Y. Yang and Z. K. Liu, {FP-gr-injective modules}, \textit{Math. J. Okayama Univ.} \textbf{53} (2011), 83--100. 

\bibitem{NG} T. Zhao, Z. Gao and Z. Huang, {Relative FP-gr-injective and gr-flat modules},  \textit{Int. Algebra and Computation}  \textbf{28} (2018), 959--977.

\end{thebibliography}
\end{document}